\documentclass[12pt]{article}

\usepackage{amsfonts}
\usepackage{amssymb}
\usepackage{amsthm}
\usepackage{amsmath}
\usepackage{mathrsfs}
\usepackage[latin1]{inputenc}
\usepackage[all]{xy}
\usepackage[left=1.20in,right=1.20in]{geometry}
\usepackage{xcolor}

\usepackage{listings}


\lstset{
  language=Java,
  aboveskip=3mm,
  belowskip=3mm,
  showstringspaces=false,
  columns=flexible,
  basicstyle={\small\ttfamily},
  numbers=none,
  breaklines=true,
  breakatwhitespace=true,
  tabsize=3
}

\def\uno{\mathbf{1}}

\theoremstyle{plain}
\newtheorem{teo}{Theorem}[section]
\newtheorem{prop}[teo]{Proposition}
\newtheorem{coro}[teo]{Corollary}
\newtheorem{lema}[teo]{Lemma}

\theoremstyle{definition}
\newtheorem{defi}[teo]{Definition}
\newtheorem{nota}[teo]{Notation}
\newtheorem{rem}[teo]{Remark}

\theoremstyle{remark}

\title{On the essential algebra  of the shifted Burnside biset functor}
\author{Nadia Romero\footnote{\texttt{nadia.romero@ugto.mx}}\\ 
\begin{small}
Departamento de Matem\'aticas,
\end{small}\\
\begin{small}
Universidad de Guanajuato, Mexico.
\end{small}
\date{ }
}

\begin{document}

\maketitle

\begin{abstract}
We describe the essential algebra, $\widehat{kB_T}(G)$, of the Burnside biset functor shifted by a group $T$, at a group $G$, in two cases. First, when $G$ and $T$ are both finite abelian groups and $k$ is a field of characteristic $0$. In this case, $\widehat{kB_T}(G)$ is isomorphic to a quotient of the \textit{shifted star algebra}, which is defined in terms of the subgroups of $G\times G\times T$. The second case is  when $G$ and $T$ are any finite groups satisfying $(|G|,\, |T|)=1$ and $k$ is a commutative unitary ring. In this case, $\widehat{kB_T}(G)$ is isomorphic to a semidirect product of $Out(G)$ and $kB^{Z(G)}(T)$, the monomial Burnside ring of $T$ with coefficients in $Z(G)$. 

The aim of the article is to consider the natural set of generators of $\widehat{kB_T}(G)$ coming from the transitive elements in $kB_T(G\times G)$ and explore some cases in which it is possible to give a basis for $\widehat{kB_T}(G)$ in this set. 

$ $\\
\noindent{\textbf{Keywords}: Essential algebra, shifted Burnside biset functor, star operation, monomial Burnside ring.}

\noindent{\textbf{AMS MSC (2010)}: 16S99, 16D25, 18B99, 19A22.}
\end{abstract}

\section*{Introduction}

The notion of an essential algebra in the context of biset functors appeared first in the  classification of simple  biset functors (see, for example,  \cite{ensemble} and \cite{biset}). 
Latter on, it showed to be a key element in the problem of classifying simple modules over an arbitrary Green biset functor (see  \cite{maxime} and \cite{lachica2}). More recently, the notion appeared in other related context, such as the correspondence functors (\cite{corr1} and \cite{corr2}) and the  diagonal $p$-permutation functors (\cite{b-deniz}).  On the other hand, the shifted biset functors are now known to be a fundamental part of the theory of biset functors (see, for example, \cite{centros}).

The article is divided in  four sections.  In Section 1 we recall  basic results regarding the shifted Burnside biset functor, $kB_T$, for $k$ a commutative ring with unity and $T$ a finite group. 
 For further details on the theory of biset functors, please see \cite{biset}.  
 In Section 2 we generalize some results by Boltje and Danz (\cite{boltje-danz}) regarding the star operation, obtaining \textit{shifted} versions of them.
In particular, if $k$ is a field of characteristic $0$, if $G$ is a finite group and if $\mathscr{S}_{GGT}$ denotes the set of subgroups of $G\times G\times T$, we show that there exists an injective ring homomorphism, non-unitary in general,
\begin{displaymath}
\alpha_{GGT}:k B_T(G\times G)\rightarrow k\mathscr{S}_{GGT},
\end{displaymath}
where, of course, $k\mathscr{S}_{GGT}$ has a suitable star algebra structure. Furthermore, at the end of Section 2 we define an essential algebra $\widehat{k\mathscr{S}}_{GGT}$ for $k\mathscr{S}_{GGT}$ and see that $\alpha_{GGT}$ extends to a (in general non-unitary) ring morphism $\widehat{\alpha}_{GGT}:\widehat{kB_T}(G)\rightarrow\widehat{k\mathscr{S}}_{GGT}$. In Theorem \ref{iso2abe} we show that if $G$ and $T$ are finite abelian groups, then  $\widehat{\alpha}_{GGT}$ is a ring isomorphism.

The main result of the article is Theorem \ref{elteo}.  Here we come back to the general case and show that for any groups $G$ and $T$ and $k$ a commutative unitary ring, if we consider the monomial Burnside ring $kB^{Z(G)}(T)$, then  there is an injective algebra homomorphism
\begin{displaymath}
\Phi: kB^{Z(G)}(T)\rtimes Out(G)\rightarrow kB_T(G\times G),
\end{displaymath}
which actually extends injectively to $\widehat{kB_T}(G)$. As a consequence, in Corollary \ref{isoprimos}, we show that if  $G$ and $T$ are groups such that $(|G|,\, |T|)=1$, then   $\widehat{kB_T}(G)$ is isomorphic to $kB^{Z(G)}(T)\rtimes Out(G)$. Furthermore, in this case we have an isomorphism of $k$-modules,
\begin{displaymath}
kB_T(G\times G)\cong (kB^{Z(G)}(T)\rtimes Out(G))\oplus \mathcal{I}_T(G),
\end{displaymath}
where $\mathcal{I}_T(G)$ is the ideal of $kB_T(G\times G)$ generated by elements  that factor through groups $K$ such that $|K|<|G|$. This generalizes the results of Bouc for the case $T=1$ (see Section 4 in \cite{ensemble}). 

Along Section 3, we deal with some subsets of the set of generators of $\widehat{kB_T}(G)$ coming from the transitive elements in $kB_T(G\times G)$ that appear naturally in this context. So, in Section 4 we give examples, with $k$ being a field of characteristic $0$, to show that these sets may be properly contained in one another. The objective of this is to show that explicitly giving a basis for $\widehat{kB_T}(G)$ in this set of generators, in the general case, may not be an easy task. We also show that the morphism $\widehat{\alpha}_{GGT}$ is not injective in general.

A bit of the chronology of the article is worth mentioning. 	The first result to be conjectured by the author was a primitive version of Corollary \ref{isoprimos}. Then, Serge Bouc kindly provided wonderful help to confection a GAP  program (\cite{GAP4}) to test this conjecture, which was rapidly confirmed in several examples. After observing other examples generated by the program at the same time, Theorem \ref{iso2abe} was conjectured too. At this stage, the GAP program had became more sophisticated, the last version of it appears in subsection 4.3. 


\section{Preliminaries}

We fix the following notation.

Throughout the article, $G$, $H$ and $K$ will be finite groups. The trivial group will be denoted by $\uno$ and a cyclic group of order $n>1$ by $C_n$.

Whenever we have three or more finite groups, we will write $GHK$ for $G\times H\times K$.

If $a,\,g\in G$, we write $^ag$ for $aga^{-1}$ and $g^a$ for $a^{-1}ga$. Also, $c_a$ denotes the conjugation on $G$ by $a\in G$, sending $g\in G$ to $aga^{-1}$.

If $\sigma :G\rightarrow G$ is an automorphism of $G$, then $\Delta_{\sigma}(G)$ is the subgroup of $G\times G$, $\{(\sigma(g),\,g)\mid g\in G\}$. If $\sigma$ is the identity, then we simply write $\Delta(G)$.

We write $\mathscr{S}_G$ for the set of subgroups of $G$ and $[\mathscr{S}_G]$ for a set of representatives of the conjugacy classes of $\mathscr{S}_G$.

In this section, $k$ will be a commutative unitary ring and $T$ will be a finite group.  

\subsection{The shifted Burnside functor}

We recall some results from sections 2.3 and 8.2 of \cite{biset}, and Section 4.2 of \cite{lachica}. Lemma 2.3.25 in \cite{biset} will be called Goursat's lemma.


Recall that the biset functor $kB_T$ sends a finite group $G$ to the Burnside group $kB_T(G)= kB(G\times T)$. In a finite $(H,\, G)$-biset $X$, it defines a morphism of $k$-modules 
\begin{displaymath}
kB_T(X):kB_T(G)\rightarrow kB_T(H);\quad y\mapsto kB(X\times T)(y).
\end{displaymath}
It is straightforward to see that if $Y$ is a $(G\times T)$-set, then $kB(X\times T)(Y)$ is isomorphic, as $(H\times T)$-set, to $X\times_G Y$ with the action of $T$ given by $t[x,\, y]=[x,\, ty]$, for $[x,\,y]$ in $X\times_GY$. We will denote this operation by $X\times_G^dY$. The superscript $d$ stands for \textit{diagonal}, we will see next why this notation makes sense.

The category $\mathcal{P}_{kB_T}$, associated to $kB_T$, has for objects the class of finite groups and as set of arrows from a group $G$ to a group $H$, the $k$-module $kB_T(H\times G)$. Composition in $\mathcal{P}_{kB_T}$ is given in the following way: if $X$ is a $HGT$-set and $Y$ is a $GKT$-set, then $X\circ Y$ is the $HKT$-set $X\times_G^dY$, that is, it is the $H\times K$-set $X\times_GY$ with diagonal action of $T$, $t[x,\, y]=[tx,\, ty]$. The operation of the previous paragraph can be seen as a particular case of this composition, with $K=\uno$ and the action of $T$ on $X$ being trivial.

For a group $G$, we have then the endomorphism algebra $kB_T(G\times G)$, with identity given by the isomorphism class of $(G G T)/(\Delta(G) T)$.

\begin{nota}
\label{notastar}
Let $D$ be a subgroup of $HKT$. We will write $p_1(D)$, $p_2(D)$ and $p_3(D)$ for the
projections of $D$ in $H$, $K$ and $T$ respectively, $p_{1,2}(D)$ will denote the projection over $H\times K$, and in the same way we define the other possible combinations of subscripts. We write $k_1(D)$ for $\{h\in H\mid (h,\,1,\,1)\in D\}$, which is a normal subgroup of $p_1(D)$. Similarly, we define $k_2(D)$, $k_3(D)$ and $k_{i,j}(D)$ for all possible combinations of $i$ and $j$.

If $E$ is  a subgroup of $KGT$, we denote by $D\ast E$ the set
\begin{displaymath}
\{(h,\, g,\, t)\in HGT\mid \exists x\in K \textrm{ s.t. } (h,\, x,\,t)\in D \textrm{ and }(x,\, g,\, t)\in E\}.
\end{displaymath}
\end{nota}

We have the following results.

\begin{lema}
\label{stabs}
Let $G$, $H$ and $K$ be groups,  $U$ be an $HGT$-set and $V$ be a $KHT$-set. Then if $u\in U$ and $v\in V$, the stabilizer of $[v,\, u]\in V\times_H^dU$ under the action of $KGT$ is
\begin{displaymath}
Stab_{KGT}([v,\, u])=Stab_{KHT}(v)\ast Stab_{HGT}(u).
\end{displaymath}
\end{lema}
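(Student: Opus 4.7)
My plan is to unravel the $KGT$-action on the class $[v,u]\in V\times_H^d U$ and translate the stabilizer condition directly into the condition defining the $\ast$ operation of Notation \ref{notastar}.

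First, I would make the action explicit. Since the composition in $\mathcal{P}_{kB_T}$ equips the tensor $V\times_H U$ with the diagonal $T$-action, and since the factor $T$ commutes with $K$ inside $KHT$ and with $G$ inside $HGT$, one checks that
\begin{displaymath}
(k,g,t)\cdot[v,u]=[(k,1,t)v,\,(1,g,t)u],
\end{displaymath}
and that this formula is well-defined on classes in $V\times_H U$ because the actions of $K$ and $T$ on $V$ commute with the $H$-action used for tensoring, and similarly for $G$, $T$ and $U$.

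Next, I would invoke the standard orbit description of the tensor product. Written in terms of the left actions of $H$ on $V$ and on $U$, the generating relation $(vh,u)\sim(v,hu)$ becomes $(a,b)\sim((1,h,1)a,(h,1,1)b)$ for every $h\in H$, so $[a,b]=[a',b']$ in $V\times_H U$ if and only if there exists $h\in H$ with $a'=(1,h,1)a$ and $b'=(h,1,1)b$. Applying this criterion to the equation $(k,g,t)\cdot[v,u]=[v,u]$ reduces the stabilizer condition to: there exists $h\in H$ such that $(k,1,t)v=(1,h,1)v$ and $(1,g,t)u=(h,1,1)u$.

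To finish, I would multiply these two equalities on the left by $(1,h^{-1},1)$ and by $(h^{-1},1,1)$ respectively, rewriting them as $(k,h^{-1},t)\in Stab_{KHT}(v)$ and $(h^{-1},g,t)\in Stab_{HGT}(u)$ simultaneously. Setting $h':=h^{-1}$, this is exactly the condition that $(k,g,t)\in Stab_{KHT}(v)\ast Stab_{HGT}(u)$ in Notation \ref{notastar}. Since every step above is an equivalence, the two stabilizers coincide. The only delicate point I foresee is pinning down the explicit $KGT$-action formula from the biset composition in $\mathcal{P}_{kB_T}$, namely, carefully tracking that $V$ is treated as a left $KHT$-set (with $H$ playing the role of the ``right'' side of the biset structure) rather than as a $(K,H)$-biset carrying a separate $T$-action; once that formula is in hand, the rest of the argument is a short chain of equivalences.
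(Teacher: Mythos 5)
Your proof is correct, and it is essentially the argument the paper has in mind: the paper simply defers to Lemma 2.3.20 of Bouc's book, whose proof is exactly this unraveling of the $KGT$-action on $V\times_H^d U$ and of the $H$-orbit description of the tensor product, with the extra $T$-coordinate carried along diagonally as you do. Your explicit action formula $(k,g,t)\cdot[v,u]=[(k,1,t)v,(1,g,t)u]$ and the substitution $h'=h^{-1}$ match the condition defining $Stab_{KHT}(v)\ast Stab_{HGT}(u)$ in Notation \ref{notastar}, so nothing is missing.
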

\begin{proof}
The proof is a straightforward generalization of that of Lemma 2.3.20 in \cite{biset}.
\end{proof}

The next two lemmas are proved in \cite{lachica}. We include the statements here, since it will be useful to have them at hand in the next sections. 

\begin{lema}[Lemma 4.5 in \cite{lachica}]
\label{mackey}
Let $G$, $L$, $K$ and $T$ be groups. If $E$ is a subgroup of $GLT$ and $D$ is a subgroup of $LKT$,
then we have an isomorphism 
\begin{displaymath}
(G L T)/E \times_L^d (L K T)/D \cong \bigsqcup_{(l,\, t)} (G K T)/(E \ast ^{(l,\, 1,\,t)}D) 
\end{displaymath}
of $GKT$-sets. Here $(l,\, t)$ runs through a set of representatives of the double cosets $p_{2,3}(E)\backslash L\times T / p_{1,3}(D)$.
\end{lema}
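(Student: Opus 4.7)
The plan is to compute the $GKT$-orbits of $(GLT)/E \times_L^d (LKT)/D$ directly and identify each orbit via its stabilizer, exactly as in the classical Mackey decomposition but with extra care for the diagonal $T$-action.

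First I would establish a normal form: every $GKT$-orbit has a representative of the form $[E,\,(l,\,1,\,t)D]$ for some $(l,\,t)\in L\times T$. Starting from an arbitrary element $[(g_0,\,l_0,\,t_0)E,\,(l'_0,\,k_0,\,t'_0)D]$, apply the tensor relation with $l_0^{-1}\in L$ to absorb $l_0$ into the second factor; then act by $g_0^{-1}\in G$, then diagonally by $t_0^{-1}\in T$, and finally by $k_0^{-1}\in K$. Each step is a direct computation using the explicit formulas for the $GKT$-action and the $L$-equivalence of $\times_L^d$.

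The key step is to determine when two normal forms $[E,\,(l,\,1,\,t)D]$ and $[E,\,(l',\,1,\,t')D]$ lie in the same $GKT$-orbit. I would write out the condition $(g_0,\,k_0,\,t_0)\cdot[E,\,(l',\,1,\,t')D]=[E,\,(l,\,1,\,t)D]$ and apply the tensor relation with an auxiliary $l_1\in L$ to split it into the two simultaneous requirements $(g_0,\,l_1,\,t_0)\in E$ (which projects to $(l_1,\,t_0)\in p_{2,3}(E)$) and $(l^{-1}l_1l',\,k_0,\,t^{-1}t_0t')\in D$ (which projects to $(l^{-1}l_1l',\,t^{-1}t_0t')\in p_{1,3}(D)$). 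Setting $(a,\,b):=(l_1,\,t_0)$ and $(c,\,d):=(l^{-1}l_1l',\,t^{-1}t_0t')$, these become the single equation
\[(l,\,t)=(a,\,b)\,(l',\,t')\,(c,\,d)^{-1},\]
with $(a,\,b)\in p_{2,3}(E)$ and $(c,\,d)\in p_{1,3}(D)$. Since both projections are subgroups of $L\times T$, this is exactly the statement that $(l,\,t)$ and $(l',\,t')$ represent the same double coset in $p_{2,3}(E)\backslash L\times T/p_{1,3}(D)$. The converse, that double-coset-equivalent pairs give the same orbit, follows from the definition of projections: given the required elements of $p_{2,3}(E)$ and $p_{1,3}(D)$, one can always choose lifts $g_0\in G$ and $k_0\in K$ completing them to genuine elements of $E$ and $D$.

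Finally, Lemma \ref{stabs} supplies the stabilizer of each representative:
\[Stab_{GKT}([E,\,(l,\,1,\,t)D])=Stab_{GLT}(E)\ast Stab_{LKT}((l,\,1,\,t)D)=E\ast {}^{(l,\,1,\,t)}D,\]
so the corresponding orbit is $GKT$-isomorphic to $(GKT)/(E\ast {}^{(l,\,1,\,t)}D)$, and assembling the orbits yields the claimed decomposition. I expect the main obstacle to be the bookkeeping in the double-coset step, where the interplay between the $L$-equivalence of the tensor product and the diagonal $T$-action is exactly what forces the parameter space to live in $L\times T$ rather than merely in $L$; keeping the indices and inverses straight, and verifying that subgroup projections can always be lifted, is the delicate part of the argument.
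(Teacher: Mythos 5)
Your argument is correct and is the standard Mackey-decomposition proof (normal forms for the $GKT$-orbits, double-coset identification in $L\times T$, and stabilizers via Lemma \ref{stabs}), which is exactly the approach of the cited proof in \cite{lachica} generalizing Lemma 2.3.24 of \cite{biset}; the paper itself only quotes the statement. The double-coset computation you highlight, including the identity $(l,\,t)=(a,\,b)(l',\,t')(c,\,d)^{-1}$ and the lifting of projections back to $E$ and $D$, checks out.
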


\begin{lema}[Lemma 4.8 in \cite{lachica}]
\label{alabouc}
If $D$ is a subgroup of $HKT$, then the $HKT$-set $(H K T)/D$ is isomorphic to
\begin{itemize}
\item[i)] $X\times_{D_1}^dY$, where $D_1=p_1(D)/k_1(D)$, for some $X\in kB_T(H \times D_1)$ and $Y\in kB_T(D_1\times K)$.
\item[ii)] $W\times_{D_2}^dZ$ where $D_2=p_2(D)/k_2(D)$, for some $W\in kB_T(H\times D_2)$ and $Z\in kB_T(D_2\times K)$.
\end{itemize}
\end{lema}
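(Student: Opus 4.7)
The strategy is to exhibit the required $X$ and $Y$ as explicit transitive sets in $kB_T(H \times D_1)$ and $kB_T(D_1 \times K)$ respectively, and then to appeal to Lemma~\ref{mackey} to verify that $X \times_{D_1}^d Y$ collapses to a single $HKT$-orbit isomorphic to $(HKT)/D$. The construction mirrors the classical Bouc factorisation of a transitive biset through $p_1/k_1$, adapted to carry along the extra $T$-factor diagonally.

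For part (i), writing $A = k_1(D)$, my plan is to define
\[
E := \{(b,\, bA,\, t) \mid b \in p_1(D),\ t \in T\} \leq H \times D_1 \times T,
\]
\[
F := \{(bA,\, k,\, t) \mid (b,k,t) \in D\} \leq D_1 \times K \times T,
\]
and set $X := (HD_1T)/E$, $Y := (D_1KT)/F$. That $E$ and $F$ are subgroups is straightforward; for $F$ one uses the natural surjection $D \twoheadrightarrow F$, $(b,k,t) \mapsto (bA,k,t)$. The entire argument then hinges on two short computations: first, $p_{2,3}(E) = D_1 \times T$, because the $T$-coordinate in $E$ is left completely free; second, $E \ast F = D$. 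Granting the first, Lemma~\ref{mackey} makes the double coset index $p_{2,3}(E) \backslash (D_1 \times T) / p_{1,3}(F)$ a single point, so the lemma yields $X \times_{D_1}^d Y \cong (HKT)/(E \ast F)$, and the second computation identifies this with $(HKT)/D$.

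The inclusion $D \subseteq E \ast F$ is immediate by choosing $l = hA$ as intermediate element. For the reverse inclusion, unwinding the star operation produces some $h' \in p_1(D)$ with $h'A = hA$ and $(h', k, t) \in D$; writing $h' = ha$ with $a \in A = k_1(D)$, one uses $(a^{-1}, 1, 1) \in D$ to conclude $(h, k, t) = (ha, k, t)(a^{-1}, 1, 1) \in D$. Part (ii) then follows by an entirely symmetric argument: setting $B = k_2(D)$, one takes
\[
E' := \{(h,\, kB,\, t) \mid (h, k, t) \in D\}, \qquad F' := \{(kB,\, k,\, t) \mid k \in p_2(D),\ t \in T\},
\]
and this time it is $p_{1,3}(F') = D_2 \times T$ that trivialises the Mackey index, while $E' \ast F' = D$ by the analogous argument.

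I do not foresee a serious obstacle; the whole proof reduces to the two star-operation computations above. The only genuine subtlety lies in the asymmetric design of the two factor subgroups: exactly one of them must be equipped with a free $T$-coordinate so that its projection to $D_i \times T$ is surjective, and this is precisely what forces $X \times_{D_i}^d Y$ to be a single orbit rather than a disjoint union of several.
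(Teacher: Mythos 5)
Your proof is correct: the subgroups $E,F$ (resp.\ $E',F'$) are well defined, the surjectivity of $p_{2,3}(E)$ (resp.\ $p_{1,3}(F')$) onto $D_1\times T$ (resp.\ $D_2\times T$) does collapse the Mackey double-coset sum of Lemma~\ref{mackey} to a single term, and the two inclusions establishing $E\ast F=D$ go through exactly as you indicate, using $(a^{-1},1,1)\in D$ for $a\in k_1(D)$. The paper does not reprove this statement (it cites Lemma~4.8 of \cite{lachica}), but your argument is precisely the expected shifted analogue of Bouc's canonical factorization of a transitive biset through $p_1(D)/k_1(D)$, so there is nothing to object to.
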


The last basic result of this section is a straightforward generalization of point 2 of Lemma 2.3.22 in \cite{biset}.

\begin{lema}
Let $G$, $H$ and $K$ be groups. Let $L$ be a subgroup of $HGT$ and $M$ be a subgroup of $KHT$. There are inclusions of subgroups
\begin{displaymath}
k_1(M)\subseteq k_1(M\ast L)\subseteq p_1(M\ast L)\subseteq p_1(M),
\end{displaymath} 
\begin{displaymath}
k_2(L)\subseteq k_2(M\ast L)\subseteq p_2(M\ast L)\subseteq p_2(L),
\end{displaymath}
\begin{displaymath}
k_3(M)\cap k_3(L)\subseteq k_3(M\ast L)\subseteq p_3(M\ast L)\subseteq p_3(M)\cap p_3(L).
\end{displaymath}
\end{lema}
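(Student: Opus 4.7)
The proof is a direct unpacking of the definition of the star operation, with all three inclusion chains following the same element-chasing pattern. First I would make explicit the ambient groups: since $M\subseteq KHT$ and $L\subseteq HGT$, the notation forces $M\ast L$ to be a subset of $KGT$, so $p_1(M\ast L)\le K$, $p_2(M\ast L)\le G$, and $p_3(M\ast L)\le T$, which is exactly what is needed to compare with $p_1(M)\le K$, $p_2(L)\le G$, and $p_3(M)\cap p_3(L)\le T$.

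Next I would handle each chain. In every chain the middle inclusion, from the kernel into the projection, is immediate. For the outer inclusions in chains one and two, I would pick an element on the smaller side and produce the required witness. For instance, if $k\in k_1(M)$, then $(k,1,1)\in M$, and since $(1,1,1)\in L$ automatically (using $h=1$ as the intermediate element of $H$), the definition of $\ast$ gives $(k,1,1)\in M\ast L$, i.e.\ $k\in k_1(M\ast L)$. Symmetrically, $k_2(L)\subseteq k_2(M\ast L)$ by taking $h=1$ with $(1,1,1)\in M$ as witness. For the projection direction, if $k\in p_1(M\ast L)$ there exist $g,t$ with $(k,g,t)\in M\ast L$, hence some $h\in H$ with $(k,h,t)\in M$, so $k\in p_1(M)$; and similarly $p_2(M\ast L)\subseteq p_2(L)$.

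The only chain with a slightly different flavor is the third, because of the intersections. If $t\in k_3(M)\cap k_3(L)$, then both $(1,1,t)\in M$ and $(1,1,t)\in L$, so choosing $h=1$ gives $(1,1,t)\in M\ast L$, hence $t\in k_3(M\ast L)$. Conversely, if $t\in p_3(M\ast L)$, then $(k,g,t)\in M\ast L$ for some $k,g$, whence there exists $h$ with $(k,h,t)\in M$ and $(h,g,t)\in L$ simultaneously; this single element $h$ certifies $t\in p_3(M)$ and $t\in p_3(L)$, so $t$ lies in the intersection.

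There is no real obstacle here — the statement is purely definitional, a shifted analogue of point~2 of Lemma~2.3.22 in \cite{biset}, and the passage from two to three coordinates (with $T$ as a ``common'' coordinate shared by $M$ and $L$) affects only the third chain, explaining why the intersection $p_3(M)\cap p_3(L)$ appears there while the first two chains give one-sided bounds $p_1(M)$ and $p_2(L)$. The normality assertions implicit in writing $k_i(M\ast L)$ as a ``kernel-type'' subgroup of $p_i(M\ast L)$ are already built into Notation~\ref{notastar} applied to the subgroup $M\ast L\le KGT$, so they require no further argument.
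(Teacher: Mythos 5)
Your proof is correct and is exactly the argument the paper intends: the lemma is stated there without proof, being described as a straightforward generalization of point 2 of Lemma 2.3.22 of Bouc's book, and your element-chase (using $h=1$ as witness for the kernel inclusions and extracting the common witness $h$ for the projection inclusions) supplies precisely the omitted details, including the correct explanation of why only the third chain acquires intersections.
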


\subsection{The essential algebra}

The essential algebra of $kB_T$ at a group $G$ is defined as the quotient of the endomorphism algebra of $G$ in $\mathcal{P}_{kB_T}$, that is $kB_T(G\times G)$ with product $\times_G^d$, over the ideal $\mathcal{I}_T(G)$, generated by elements of the form $\alpha\times_K^d\beta$, where $\alpha\in kB_T(G\times K)$, $\beta\in kB_T(K\times G)$ and $K$ is a group such that $|K|<|G|$. It is denoted by $\widehat{kB_T}(G)$.

\begin{defi}
\label{elmorf}
We denote by $E_{T,\, G}$ the natural ring epimorphism
\begin{displaymath}
E_{T,\, G}:kB_T(G\times G)\rightarrow \widehat{kB_T}(G).
\end{displaymath}
We will write $[x]$ for the image of an element $x\in kB_T(G\times G)$ under $E_{T,\, G}$.
\end{defi} 

 The classes of the transitive $GGT$-sets, denoted by $[(GGT)/D]$, form a $k$-basis for $kB_T(G\times G)$. Hence, $[[(GGT)/D]]$, their images under $E_{T,\, G}$,  form a generating set for $\widehat{kB_T}(G)$. 
The goal in Section 3 will be to find, in some cases, a $k$-basis of $\widehat{kB_T}(G)$ in this set.


\begin{lema}\label{comblin}
Suppose that the element $a_1[e_1]+\cdots +a_n[e_n]$, for some different transitive elements $[e_1],\ldots ,[e_n]$ in  $kB_T(G\times G)$, lies in the kernel of $E_{T,\, G}$. Then, if $a_j\neq 0$, for some $j$, and  $e_j=GGT/D$, we have that $D$ can be written as $D=E\ast F$, with $E\leqslant GKT$  and $F\leqslant KGT$ for some group $K$ such that $|K|<|G|$.
\end{lema}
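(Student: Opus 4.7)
The plan is to unwind what it means for $\sum a_j[e_j]$ to lie in $\mathcal{I}_T(G)$, expand everything in the basis of transitive elements, and then verify a small compatibility between $GGT$-conjugation and the $\ast$ factorization.

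First I would note that $\mathcal{I}_T(G)$ is the $k$-submodule spanned by all products $\alpha\times_K^d\beta$ with $\alpha\in kB_T(G\times K)$, $\beta\in kB_T(K\times G)$ and $|K|<|G|$: if we left- or right-multiply such a product by an element of $kB_T(G\times G)$ it still factors through the same $K$, so the two-sided ideal coincides with the $k$-span. Expanding $\alpha$ and $\beta$ in the transitive bases of $kB_T(G\times K)$ and $kB_T(K\times G)$, we may assume each generator has the form $[(GKT)/E]\times_K^d[(KGT)/F]$ with $E\leqslant GKT$ and $F\leqslant KGT$.

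Next I would apply Lemma \ref{mackey} to each such generator, writing
\begin{displaymath}
[(GKT)/E]\times_K^d[(KGT)/F]=\sum_{(l,t)}[(GGT)/(E\ast {}^{(l,1,t)}F)],
\end{displaymath}
where $(l,t)$ runs through double coset representatives. Since ${}^{(l,1,t)}F$ is the conjugate of $F$ inside $KGT$, it is still a subgroup of $KGT$; so each summand is a transitive $GGT$-set whose stabilizer has the shape $E'\ast F'$ with $E'\leqslant GKT$ and $F'\leqslant KGT$, for some $K$ with $|K|<|G|$.

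The key remaining step is to check that this shape is preserved by $GGT$-conjugation, so that when we pass to the chosen set of conjugacy class representatives the factorization survives. Concretely, for $(g_1,g_2,t_0)\in GGT$ and any $E'\leqslant GKT$, $F'\leqslant KGT$, a direct computation using the decomposition $(g_1,g_2,t_0)(a,b,u)(g_1,g_2,t_0)^{-1}=(g_1 a g_1^{-1},g_2 b g_2^{-1},t_0 u t_0^{-1})$ shows
\begin{displaymath}
{}^{(g_1,g_2,t_0)}(E'\ast F')=\bigl({}^{(g_1,1,t_0)}E'\bigr)\ast\bigl({}^{(1,g_2,t_0)}F'\bigr),
\end{displaymath}
and the two factors on the right still lie in $GKT$ and $KGT$ respectively. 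This is the only piece of real content; it is a short but essential calculation.

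Finally, rewriting each summand in the chosen basis $\{[(GGT)/D]\}$ of $kB_T(G\times G)$, the whole element $\sum a_j[e_j]\in\mathcal{I}_T(G)$ becomes a $k$-linear combination of basis vectors $[(GGT)/D]$ for which the representative $D$ (conjugate in $GGT$ to some $E'\ast F'$) admits, by the identity above, its own factorization $D=E\ast F$ with $E\leqslant GKT$, $F\leqslant KGT$ and $|K|<|G|$. Comparing coefficients in this basis, any $D_j$ that appears with $a_j\neq 0$ must be one of these $D$'s, giving the required factorization. The main obstacle is really just the conjugation identity; everything else is bookkeeping with Lemma \ref{mackey} and the transitive basis.
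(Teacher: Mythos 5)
Your proposal is correct and follows essentially the same route as the paper's own proof: expand the ideal element in the transitive basis via Lemma \ref{mackey}, observe that every resulting stabilizer has the form $E'\ast F'$ with $|K|<|G|$, and use the conjugation identity ${}^{(g_1,g_2,t_0)}(E'\ast F')=({}^{(g_1,1,t_0)}E')\ast({}^{(1,g_2,t_0)}F')$ to see that this shape passes to conjugacy class representatives, so comparing coefficients forces the factorization of $D$. Your explicit remark that the two-sided ideal coincides with the $k$-span of the products $\alpha\times_K^d\beta$ is a point the paper leaves implicit, but it does not change the argument.
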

\begin{proof}
Since
\begin{displaymath}
a_1[[e_1]]+\cdots +a_n[[e_n]]=0
\end{displaymath}
in $\widehat{kB_T}(G)$, we have
\begin{displaymath}
a_1[e_1]+\cdots +a_n[e_n]=\sum_{i=1}^rb_i(x_i\times_{K_i}^dy_i).
\end{displaymath}
for some $x_i\in kB_T(G\times K_i)$, $y_i\in kB_T(K_i\times G)$, $b_i\in k$ and $K_i$ groups such that $|K_i|<|G|$. If we write the elements in the right-hand side of the equation in terms of transitive $GGT$-sets, we obtain, by Lemma \ref{mackey}, that for some $i$ there exist  $A_i\leqslant GK_iT$ and $B_i\leqslant K_iGT$ such that
\begin{displaymath}
(GGT)/D\cong (GGT)/(A_i\ast B_i)
\end{displaymath}
as $GGT$-sets. Hence $D=(A_i\ast B_i)^{(g,\, g',\, t)}$ for some $(g,\, g',\, t)\in GGT$. But it is easy to see that the conjugate $(A_i\ast B_i)^{(g,\, g',\, t)}$ is equal to $A_i^{(g,\, 1,\, t)}\ast B_i^{(1,\, g',\, t)}$. So, taking $K=K_i$, $E=A_i^{(g,\, 1,\, t)}$ and $F=B_i^{(1,\, g',\, t)}$ we obtain the result.
\end{proof}


\subsection{The monomial Burnside ring}

The monomial Burnside ring will appear at the end of the paper, so we recall its definition and the description of its basis given by transitive sets (see \cite{dmon}). For further results about it see \cite{bamon}, \cite{lachica2} or \cite{bolco}. 


\begin{defi}
Let $C$ be a finite abelian group and $G$ be a finite group. A finite $C$-free $(G\times C)$-set is called a 
$C$-fibred $G$-set. The monomial Burnside ring for $G$ with coefficients in $C$, denoted by $kB^C(G)$, is, as a $k$-module, the submodule  of $kB(G\times C)$ generated by the the classes of the $C$-fibred $G$-sets. 
\end{defi}

The product in $kB^C(G)$ is given in the following way. Let $X$ and $Y$ be $C$-fibred $G$-sets and regard $X$ as having the action of $C$ on the right. Then consider $X\times_CY$ with the $(G\times C)$-action
\begin{displaymath}
(g,\, c)[x,\, y]=[(g,\, c)x,\, (g,\, 1)y]=[(g,\,1)x,\, (g,\, c)y],
\end{displaymath}
for $(g,\, c)\in G\times C$ and $[x,\, y]\in X\times Y$. It is clear that this action is well defined and that $C$-acts freely in $X\times_CY$. This product extends linearly to $kB^C(G)$ and is denoted simply by $uv$ for $u$ and $v$ in $kB^C(G)$. The identity element in $kB^C(G)$ is the class of the trivial $C$-fibred $G$-set $C$. 

\begin{nota}
\label{delmono}
If $D$ is a subgroup of $G$ and $\varphi:D\rightarrow C$ is a group homomorphism, we write $D_\varphi$ for the group $\{(x,\, \varphi(x))\mid x\in D\}$ and denote by $(D,\,\varphi)$ the transitive $C$-fibred $G$-set $(G\times C)/D_\varphi$.  We write $[D,\, \varphi]$ for the class of $(D,\,\varphi)$ in $kB^C(G)$. Notice that if $(g,\,x)\in G\times C$, then the conjugate  $^{(g,\, x)}D_{\varphi}$ is equal to $(^gD)_{\varphi c_{g^{-1}}}$, where $\varphi c_{g^{-1}}:{ }^gD\rightarrow C$. We write $^g\varphi $ for the morphism $\varphi c_{g^{-1}}$, so that $[D,\,\varphi]=[{ }^gD,\, { }^g\varphi]$. 
\end{nota}

\begin{rem}
\label{base}
Recall that the elements $[D,\,\varphi]$ for $D\leqslant G$ and $\varphi:D\rightarrow C$ a group homomorphism, form a basis of $kB^C(G)$. Moreover, if $[E,\, \psi]$ is another element of this type, their product in $kB^C(G)$ is given by
\begin{displaymath}
[D,\,\varphi][E,\,\psi]=\sum_{g\in[D\backslash G/E]}[D\cap { }^gE, \varphi\cdot { }^g\psi],
\end{displaymath}
where $[D\backslash G/E]$ is a set of representatives of $D\backslash G/E$ and the dot in the morphism $\varphi\cdot { }^g\psi:D\cap { }^gE\rightarrow C$ stands for the product in $C$.
\end{rem}

We finish this subsection by noticing that if $D\leqslant G$, then $N_G(D)$ acts on the right on the set of group homomorphisms, $Hom(D,\, C)$. The action is given by sending $g\in G$  and $f\in Hom(D,\, C)$ to $fc_g$, that is, $fc_g(x)=f({ }^gx)$ for $x\in D$. In this case,
\begin{displaymath}
[D,\, fc_g]=[{ }^gD,\, { }^g(fc_g)]=[D,\, f].
\end{displaymath}
In turn, this implies that the cardinality of the basis in Remark \ref{base} is equal to $\sum_{D\in [\mathscr{S}_G]}|\overline{Hom}(D,\, C)|$, where $\overline{Hom}(D,\,C)$ denotes the set of orbits of $Hom(D,\, C)$ under the action of $N_G(D)$.

\section{The star operation}

Throughout this section, $k$ is a field of characteristic $0$.

Following Boltje and Danz \cite{boltje-danz}, 
 we observe that for groups $G$, $H$ and $K$  we can define an operation
\begin{displaymath}
\mathscr{S}_{GHT}\times \mathscr{S}_{HKT}\longrightarrow \mathscr{S}_{GKT}
\end{displaymath}
sending a couple of subgroups $(D,\, E)$ to $D\ast E$. This operation is easily seen to be associative. Also, taking $\Delta(H)\times T\in \mathscr{S}_{HHT}$, gives $(\Delta(H)\times T)*L=L$ for any $L\in \mathscr{S}_{HHT}$. Hence, we can define a category $\mathcal{C}$, having as objects the class of all finite groups and as morphisms from $H$ to $G$, the set $\mathscr{S}_{GHT}$ (or the vector space $k\mathscr{S}_{GHT}$), with composition given by the star operation.

We denote by $\alpha_{GT}$ the morphism of $k$-vector spaces, $\alpha_{G\times T}$, defined in \cite{boltje-danz} as
\begin{displaymath}
\alpha_{GT}:k B_T(G)\rightarrow k\mathscr{S}_{G\times  T};\ [X]\mapsto \sum_{x\in X}Stab_{G\times T}(X),
\end{displaymath}
for a $G\times T$-set $X$. As it is pointed out in Proposition 3.2 of \cite{boltje-danz}, this map is injective and $\alpha_{GT}([G\times T/U])=[N_{G\times T}(U):U]\sum_{U'}U'$, where $U'$ runs over the conjugacy class of $U$. In order to make this morphism compatible with the composition in the category $\mathcal{P}_{k B_T}$, 
\begin{displaymath}
\times_H^d:k B_T(G\times H)\times k B_T(H\times K)\rightarrow k B_T(G\times K), 
\end{displaymath}
it is necessary to adjust the star composition $D\ast E$. Surprisingly, the adjustment made in the case considered in \cite{boltje-danz} works also in this case. That is, for $D$, a subgroup of  $GHT$, and $E$, a subgroup of $HKT$, consider
\begin{displaymath}
\kappa(D,\, E)=\frac{|k_2(D)\cap k_1(E)|}{|H|}
\end{displaymath}
in $k$, and define a $k$-bilinear map
\begin{displaymath}
k\mathscr{S}_{GHT}\times k\mathscr{S}_{HKT}\longrightarrow k\mathscr{S}_{GKT}
\end{displaymath}
by $D\ast_H^\kappa E=\kappa (D,\, E)D\ast E$. 

We have the following generalization of Proposition 3.5 in \cite{boltje-danz}.

\begin{lema}
Let $G$, $H$, $K$, and $I$ be finite groups. If $\kappa$ is defined as before, then for any $L\leqslant GHT$, $M\leqslant HKT$ and $N\leqslant KIT$, we have
\begin{displaymath}
\kappa(L,\, M)\kappa(L\ast M,\, N)=\kappa(L,\, M\ast N)\kappa(M,\,N).
\end{displaymath}
\end{lema}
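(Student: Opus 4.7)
The plan is to deduce the identity from an equality of cardinalities:
\begin{equation*}
|k_2(L)\cap k_1(M)|\cdot|k_2(L\ast M)\cap k_1(N)|=|k_2(L)\cap k_1(M\ast N)|\cdot|k_2(M)\cap k_1(N)|,
\end{equation*}
since dividing both sides by $|H|\cdot|K|$ and regrouping gives exactly the claimed formula.

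To prove the above equality I would introduce the auxiliary set
\begin{equation*}
Z=\{(h,k)\in H\times K\mid h\in k_2(L),\ (h,k,1)\in M,\ k\in k_1(N)\}
\end{equation*}
and count $|Z|$ in two ways via its two coordinate projections. For the projection onto $K$, the image is precisely $k_2(L\ast M)\cap k_1(N)$: indeed, by definition of the star operation, $k\in k_2(L\ast M)$ means that there exists some $h\in H$ with $(1,h,1)\in L$ and $(h,k,1)\in M$, i.e.\ some $h\in k_2(L)$ with $(h,k,1)\in M$. The main technical step is to show that each nonempty fiber is a coset of $k_2(L)\cap k_1(M)$, so that all fibers have the same size $|k_2(L)\cap k_1(M)|$. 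This comes from the group-theoretic observation that if $(h,k_0,1)$ and $(h_0,k_0,1)$ both lie in $M$ with $h,h_0\in k_2(L)$, then $(hh_0^{-1},1,1)=(h,k_0,1)(h_0,k_0,1)^{-1}\in M$ and $hh_0^{-1}\in k_2(L)$, giving $hh_0^{-1}\in k_1(M)\cap k_2(L)$; conversely, any element of $k_1(M)\cap k_2(L)$ translates a preimage of $k_0$ to another preimage. This yields the first count.

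The second count, obtained by projecting onto $H$, is entirely symmetric: the image is $k_2(L)\cap k_1(M\ast N)$ (using the dual description of the star operation, $h\in k_1(M\ast N)$ iff there is $k\in k_1(N)$ with $(h,k,1)\in M$), and for any fixed preimage of $h_0$ the remaining preimages are obtained by multiplying the $k$-coordinate on the right by elements of $k_2(M)\cap k_1(N)$. Equating the two expressions for $|Z|$ gives the cardinality identity, hence the lemma.

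The only delicate point—and the place where one should be careful—is verifying that the fibers are full cosets rather than mere shifts, since $k_2(L)\cap k_1(M)$ and $k_2(M)\cap k_1(N)$ are subgroups of $H$ and $K$ respectively (as intersections of normal subgroups with subgroups) and the argument above must confirm both that a fiber is closed under such translations and that any two preimages differ by such an element. Once this is checked, no further machinery is needed; in particular, this is purely a statement about subgroups and does not use the hypothesis that $k$ has characteristic $0$, so the proof runs word-for-word as in Proposition~3.5 of \cite{boltje-danz} with the extra factor $T$ playing no role.
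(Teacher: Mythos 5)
Your proof is correct and amounts to the same argument as the paper's: the paper reduces the identity to the group isomorphism $\bigl(k_2(L)\cap k_1(M\ast N)\bigr)/\bigl(k_2(L)\cap k_1(M)\bigr)\cong\bigl(k_2(L\ast M)\cap k_1(N)\bigr)/\bigl(k_2(M)\cap k_1(N)\bigr)$, citing Proposition 3.5 of Boltje--Danz, and your double count of $Z$ is exactly the unfolded, self-contained version of that isomorphism (the coset structure of the fibers is the same key fact). Your observation that the hypothesis on $k$ plays no role here is also accurate.
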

\begin{proof}
To prove that we have the following group isomorphism
\begin{displaymath}
\frac{k_2(L)\cap k_1(M\ast N)}{k_2(L)\cap k_1(M)}\cong \frac{k_2(L\ast M)\cap k_1(N)}{k_2(M)\cap k_1(N)}
\end{displaymath}
we follow exactly the same steps of the proof of Proposition 3.5 in \cite{boltje-danz}.
\end{proof}

\begin{coro}
Let $D$ be a subgroup of  $GHT$, $E$ be a subgroup of $HKT$ and $F$ be a subgroup of $KLT$, then
\begin{displaymath}
(D\ast_H^\kappa E)\ast_K^\kappa F=D\ast_H^\kappa (E\ast_K^\kappa F).
\end{displaymath}
In particular, the $k$-vector space $k\mathscr{S}_{G\times G}$, together with multiplication $\ast_G^\kappa$ is a $k$-algebra with identity element $|G|(\Delta(G)\times T)$.
\end{coro}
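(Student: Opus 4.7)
The plan is to reduce associativity of $\ast^\kappa$ to two ingredients that are already in hand: the associativity of the unadjusted star operation $\ast$ (remarked at the start of the section) and the scalar identity $\kappa(L,M)\kappa(L\ast M,N) = \kappa(L,M\ast N)\kappa(M,N)$ from the preceding lemma. Expanding, the left-hand side becomes
\begin{displaymath}
(D\ast_H^\kappa E)\ast_K^\kappa F = \kappa(D,E)\,\kappa(D\ast E,\,F)\,\bigl((D\ast E)\ast F\bigr),
\end{displaymath}
while the right-hand side is
\begin{displaymath}
D\ast_H^\kappa(E\ast_K^\kappa F) = \kappa(E,F)\,\kappa(D,\,E\ast F)\,\bigl(D\ast (E\ast F)\bigr).
\end{displaymath}
The two scalars coincide by the lemma and the two underlying subgroups coincide by associativity of $\ast$, giving the result on generators; $k$-bilinearity then extends the identity to arbitrary elements of $k\mathscr{S}_{GHT}$, $k\mathscr{S}_{HKT}$, $k\mathscr{S}_{KLT}$.

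For the identity element, I would verify directly that $|G|(\Delta(G)\times T)$ acts as a two-sided unit on every generator $L\leqslant G\times G\times T$. Since $(1,g,t)\in\Delta(G)\times T$ forces $g=1$, we have $k_2(\Delta(G)\times T)=\{1\}$, and symmetrically $k_1(\Delta(G)\times T)=\{1\}$. Therefore
\begin{displaymath}
\kappa(\Delta(G)\times T,\,L) = \frac{|\{1\}\cap k_1(L)|}{|G|} = \frac{1}{|G|},
\end{displaymath}
and similarly $\kappa(L,\Delta(G)\times T)=1/|G|$. Combined with the remark at the start of the section that $(\Delta(G)\times T)\ast L=L=L\ast(\Delta(G)\times T)$ for any $L\in\mathscr{S}_{GGT}$, multiplying by the scalar $|G|$ exactly cancels the factor $1/|G|$, so $|G|(\Delta(G)\times T)$ is indeed the identity for $\ast_G^\kappa$.

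There is no real obstacle here; the entire proof is bookkeeping once the preceding lemma is in hand. The only point requiring a moment's care is matching the ordering of arguments in the $\kappa$-identity to the ordering that arises when expanding each side of $(D\ast_H^\kappa E)\ast_K^\kappa F$, and keeping track of where the normalization factor $|G|$ in the unit must be inserted so that the $1/|G|$ produced by $\kappa$ is absorbed rather than duplicated. Given how short the argument is, I would simply write it out in a few lines rather than invoking intermediate results.
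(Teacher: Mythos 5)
Your proof is correct and follows exactly the route the paper intends: the corollary is stated there without proof as an immediate consequence of the associativity of $\ast$ and the preceding $\kappa$-cocycle lemma, and your verification of the unit $|G|(\Delta(G)\times T)$ via $k_2(\Delta(G)\times T)=\{1\}$ is the right computation.
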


Finally, we prove the analogue of Proposition 3.6 in \cite{boltje-danz}.

\begin{lema}
\label{abre}
Let $G$, $H$ and $K$ be finite groups. For $a$ in $k B_T(G\times H)$ and $b$ in $k B_T(H\times K)$ we have
\begin{displaymath}
\alpha_{GKT}(a\times_H^db)=\alpha_{GHT}(a)\ast_H^\kappa\alpha_{HKT}(b).
\end{displaymath}
\end{lema}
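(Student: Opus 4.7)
My plan is to reduce by $k$-bilinearity to proving the identity when $a=[X]$ and $b=[Y]$ for a $GHT$-set $X$ and an $HKT$-set $Y$, and then to expand both sides using the definition $\alpha([Z])=\sum_{z\in Z}Stab(z)$. On the left-hand side, each representative $[x,y]$ of an orbit in $X\times_H^dY$ contributes $Stab_{GKT}([x,y])$, which by Lemma \ref{stabs} equals $Stab_{GHT}(x)\ast Stab_{HKT}(y)$; the same lemma ensures that this subgroup depends only on the class $[x,y]$, not on the chosen representative.

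On the right-hand side, expanding $D\ast_H^\kappa E=\kappa(D,E)\,D\ast E$ bilinearly produces a sum indexed by all pairs $(x,y)\in X\times Y$, each weighted by the scalar
\begin{displaymath}
\kappa(Stab_{GHT}(x),Stab_{HKT}(y))=\frac{|k_2(Stab_{GHT}(x))\cap k_1(Stab_{HKT}(y))|}{|H|}.
\end{displaymath}
So the two sides will match provided the number of pairs $(x,y)$ lying over a given class $[x,y]$ is the reciprocal of this scalar.

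This orbit-counting step is the key one. I will use that $[x,y]$ is the orbit of $(x,y)$ under the $H$-action $h\cdot(x,y)=(x\cdot h^{-1},hy)$ defining $X\times_H Y$, and that unwinding the definitions of $k_2$ and $k_1$ identifies its stabilizer at $(x,y)$ with $k_2(Stab_{GHT}(x))\cap k_1(Stab_{HKT}(y))$. The orbit-stabilizer theorem then gives the fiber of $X\times Y\to X\times_H^dY$ above $[x,y]$ the cardinality $|H|/|k_2(Stab_{GHT}(x))\cap k_1(Stab_{HKT}(y))|$, which cancels $\kappa$ exactly. Regrouping the sum on the right by $H$-orbits then produces the sum on the left. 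The main subtlety I expect is simply tracking left/right $H$-action conventions so that the intersection of kernel subgroups appears as stated; this is the same bookkeeping that underlies Proposition 3.6 of \cite{boltje-danz}, whose generalization is being proved.
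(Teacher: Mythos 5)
Your proposal is correct and follows essentially the same route as the paper: reduce to transitive classes $a=[X]$, $b=[Y]$, apply Lemma \ref{stabs} to identify $Stab_{GKT}([x,y])$ with $Stab_{GHT}(x)\ast Stab_{HKT}(y)$, and then convert the sum over classes $[x,y]$ into a sum over pairs $(x,y)$ by observing that the $H$-orbit of $(x,y)$ has cardinality $[H:k_2(Stab_{GHT}(x))\cap k_1(Stab_{HKT}(y))]$, which exactly cancels the factor $\kappa$. The orbit--stabilizer computation you flag as the key step is precisely the one the paper uses.
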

\begin{proof}
Suppose that $a=[X]$ and $b=[Y]$, where $X$ is a finite $GHT$-set and $Y$ is a finite $HKT$-set. Then
\begin{eqnarray*}
\alpha_{GKT}([X\times_H^dY]) & = & \sum_{[x,\, y]\in X\times_H^d Y} Stab_{GKT}([x,\, y])\\
& = & \sum_{[x,\, y]\in X\times_H^d Y}Stab_{GHT}(x)\ast Stab_{HKT}(y)\quad \textrm{by Lemma \ref{stabs}}.
\end{eqnarray*}
But this is equal to
\begin{displaymath}
\sum_{(x,\, y)\in X\times Y}\frac{|k_2(Stab_{GHT}(x))\cap k_1(Stab_{HKT}(y))|}{|H|} Stab_{GHT}(x)\ast Stab_{HKT}(y)
\end{displaymath}
since $X\times_H^dY$, as a set, is equal to $X\times_H Y$, so the number of elements in the orbit of $(x,\, y)\in X\times Y$ under the action of $H$ is $[H:k_2(Stab_{GHT}(x))\cap k_1(Stab_{HKT}(y))]$. The sum above is clearly equal to $\alpha_{GHT}(a)\ast_H^\kappa\alpha_{HKT}(b)$.
\end{proof}

\begin{rem}
\label{iso1abe}
Suppose $G$ and $T$ are abelian. Then $\alpha_{GGT}$ is surjective and hence it is an isomorphism of $k$-algebras between $(kB_T(G\times G),\, \times_G^d)$ and $(k\mathscr{S}_{GGT},\, \ast_G^\kappa)$.
\end{rem}

The following corollary  is the analogue of Lemma \ref{alabouc} for the star operation.

\begin{coro}
\label{alastar}
If $D$ is a subgroup of $HKT$, then 
\begin{itemize}
\item[i)] $D=X\ast Y$, for some $X\leqslant H D_1T$ and $Y\leqslant D_1 KT$, with $D_1=p_1(D)/k_1(D)$.
\item[ii)] $D=W\ast Z$, for some $W\leqslant HD_2T$ and $Z\leqslant D_2KT$, with $D_2=p_2(D)/k_2(D)$.
\end{itemize}
\end{coro}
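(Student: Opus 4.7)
The natural approach is to build the witnessing subgroups $X$ and $Y$ explicitly out of $D$, using the canonical projection $\pi : p_1(D) \to D_1 = p_1(D)/k_1(D)$. (A more ``corollary-style'' alternative would be to apply $\alpha_{HKT}$ to the isomorphism in Lemma \ref{alabouc}(i), convert $\times_{D_1}^d$ into $\ast_{D_1}^\kappa$ via Lemma \ref{abre}, and compare the coefficient of $D$ on both sides to conclude that at least one star product appearing on the right must equal $D$; but the direct construction is cleaner and actually exhibits the decomposition.)

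For part (i), I would set
\[
X = \{(h, \pi(h), t) \in H \times D_1 \times T \mid \exists\, k \in K,\ (h, k, t) \in D\}
\]
and
\[
Y = \{(\pi(h), k, t) \in D_1 \times K \times T \mid (h, k, t) \in D\}.
\]
Checking that these are subgroups of $HD_1T$ and $D_1KT$ respectively is immediate, since $D$ is a subgroup and $\pi$ is a homomorphism, so products and inverses translate through $\pi$ without trouble.

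The main step is then to verify $X \ast Y = D$ from Notation \ref{notastar}. The inclusion $D \subseteq X \ast Y$ is transparent: for $(h, k, t) \in D$, the element $d = \pi(h) \in D_1$ serves as the required middle coordinate, since $(h, d, t) \in X$ (witnessed by $k$ itself) and $(d, k, t) \in Y$ (witnessed by $h$). The reverse inclusion is the only delicate point and is really where the whole statement lives. Given $(h, d, t) \in X$ and $(d, k, t) \in Y$ sharing a common $d \in D_1$, one gets $d = \pi(h) = \pi(h')$ for some $h'$ with $(h', k, t) \in D$, so $h' = hn$ for an $n \in k_1(D)$. Because $(n, 1, 1) \in D$ by the very definition of $k_1(D)$, one multiplies $(h', k, t)$ by $(n^{-1}, 1, 1)$ inside $D$ and lands on $(h, k, t)$; this absorption of the ambiguity of $\pi$ is the crux of the argument.

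Part (ii) is the mirror statement, obtained by swapping the roles of the first and second coordinates: one lets the middle coordinate travel through $D_2 = p_2(D)/k_2(D)$ instead, replacing $p_1, k_1, D_1, \pi$ with $p_2, k_2, D_2$ and the corresponding quotient map, and the same calculation gives a decomposition $D = W \ast Z$.
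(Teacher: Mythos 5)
Your proof is correct, and it takes a genuinely different route from the paper. The paper treats the statement as a true corollary: it applies $\alpha_{HKT}$ to the biset isomorphism of Lemma \ref{alabouc}, converts $\times_{D_1}^d$ into $\ast_{D_1}^\kappa$ via Lemma \ref{abre}, and observes that since the subgroups form a basis of $k\mathscr{S}_{HKT}$ and $D$ occurs with non-zero coefficient in $\alpha_{HKT}([HKT/D])$, some star product appearing on the right must equal $D$ --- exactly the ``corollary-style alternative'' you sketch and set aside. You instead exhibit the witnesses directly: $X$ and $Y$ are the images of $D$ under $(h,k,t)\mapsto(h,\pi(h),t)$ and $(h,k,t)\mapsto(\pi(h),k,t)$, which are subgroups because these maps are homomorphisms on $D$, and your verification of $X\ast Y=D$ is sound --- the inclusion $D\subseteq X\ast Y$ is immediate, and for the converse the absorption of the ambiguity of $\pi$ by multiplying with $(n^{-1},1,1)\in D$ for $n\in k_1(D)$ is precisely the needed step. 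What each approach buys: the paper's argument is a one-line consequence of the machinery already built (and mirrors the corresponding statement in Boltje--Danz), but it is nonconstructive and leans on the characteristic-$0$ hypothesis in force throughout Section 2; your argument is purely group-theoretic, produces explicit $X$ and $Y$ (essentially the canonical Goursat-type factorization underlying Lemma \ref{alabouc} itself), and makes transparent that the statement has nothing to do with the coefficient ring. Part (ii) by symmetry is fine.
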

\begin{proof}
The result follows from lemmas \ref{alabouc} and \ref{abre}, since the subgroups of $HKT$ form a basis for $k\mathscr{S}_{HKT}$ and $D$ appears in $\alpha_{HKT}([HKT/D])$ with non-zero coefficient.
\end{proof}

We will not go further on the generalization of \cite{boltje-danz}, the rest of the paper goes in a different direction. Nevertheless, the author believes that such generalization should be possible and that a deeper study of  the morphism $\alpha$ could lead to a complete description of the essential algebra for the shifted Burnside biset functor, since the star operation will be a key element in the descriptions we will give in the next section. In this sense, we begin by defining the essential algebra for the \textit{star algebra}.

\begin{defi}
\label{stares}
The essential algebra, $\widehat{k\mathscr{S}}_{GGT}$, for the algebra $(k\mathscr{S}_{GGT},\,\ast_G^{\kappa})$ is its quotient by the ideal $\mathcal{J}$ generated by elements of the form $\alpha \ast_K^\kappa\beta$, where $\alpha\in k\mathscr{S}_{GKT}$, $\beta\in k\mathscr{S}_{KGT}$ and $K$ is a group such that $|K|<|G|$.
\end{defi}


Notice that if $E\leqslant GKT$ and $F\leqslant KGT$ with $|K|<|G|$, then $E\ast F$ and $E\ast_K^{\kappa}F$ differ by a non-zero element in $k$. Hence,  $\mathcal{J}$ is equal to the ideal generated by elements of the form $\alpha \ast\beta$, where $\alpha\in k\mathscr{S}_{GKT}$, $\beta\in k\mathscr{S}_{KGT}$ and $K$ is a group such that $|K|<|G|$. 

We have a (non unitary) ring homomorphism, extending $\alpha_{GGT}$,
\begin{displaymath}
\widehat{\alpha}_{GGT}:\widehat{kB_T}(G)\rightarrow\widehat{k\mathscr{S}}_{GGT}.
\end{displaymath}
In Section 4 we will see that $\widehat{\alpha}_{GGT}$ is not injective in general. On the other hand, we will see in the next section that if $G$ and $T$ are abelian, then $\widehat{\alpha}_{GGT}$ is an isomorphism of $k$-algebras.


\section{Description in two cases}

Throughout this section $T$ is a finite group and, unless stated otherwise, $k$ is a commutative ring with unity.

The first observation is that, by Lemma \ref{alabouc}, we know that if $[[(GGT)/D]]$ is not zero in $\widehat{kB_T}(G)$, then $p_1(D)=G$, $k_1(D)=\uno$, $p_2(D)=G$ and $k_2(D)=\uno$. These conditions are equivalent to saying that $D$ can be written as
\begin{displaymath}
D=\{(f(g,\, t),\, g,\, t)\mid (g,\, t)\in A\},
\end{displaymath}
where $A\leqslant G\times T$ satisfies $p_1(A)=G$ and $f:A\rightarrow G$ is a surjective group homomorphism such that $Ker f\cap (k_1(A)\times \uno)=\uno$. 

\begin{defi}
\label{notzeroimplies}
The image under $E_{T,\, G}$ (see Definition \ref{elmorf}) of the set of transitive elements $[GGT/D]$ in $kB_T(G\times G)$ for which $D$ is of the form just described will be denoted by $\mathcal{G}$.
\end{defi}

Notice that there might be elements in $\mathcal{G}$ which are equal to zero, since Lemma \ref{alabouc} only gives a \textit{necessary} condition for $[[(GGT)/D]]$ not to be zero in $\widehat{kB_T}(G)$. Nevertheless, the set $\mathcal{G}$ is a generating set for $\widehat{kB_T}(G)$. 

The following lemma is an immediate consequence of Lemma \ref{comblin}.

\begin{lema}
\label{Einy}
Let $\mathcal{S}'$ be the set of elements $[GGT/D]$ in $kB_T(G\times G)$ for which $D$ cannot be written as $D=E\ast F$, with $E\leqslant GHT$, \mbox{$F\leqslant HGT$} and $H$ a group such that $|H|<|G|$. Then, no non-trivial linear combination of the elements of $\mathcal{S}'$ is in the kernel of $E_{T,\, G}$. In particular,  $E_{T,\, G}$ is injective in $\mathcal{S'}$ and $E_{T,\,G}(\mathcal{S'})$ is a linearly independent subset of $\widehat{kB_T}(G)$.
\end{lema}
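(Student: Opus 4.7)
The plan is to derive this as an almost immediate contrapositive of Lemma \ref{comblin}, which has already done the substantive work.

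First I would suppose, for contradiction, that there exists a non-trivial $k$-linear combination
\begin{displaymath}
a_1[e_1]+\cdots+a_n[e_n]
\end{displaymath}
of distinct elements $[e_1],\ldots,[e_n]\in \mathcal{S}'$ lying in $\ker E_{T,G}$, with at least one coefficient, say $a_j$, non-zero. Writing $e_j=(GGT)/D_j$, Lemma \ref{comblin} applies verbatim and yields a group $K$ with $|K|<|G|$ together with subgroups $E\leqslant GKT$ and $F\leqslant KGT$ such that $D_j=E\ast F$. This directly contradicts the hypothesis that $[e_j]\in\mathcal{S}'$, so no such non-trivial linear combination can exist.

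From this the two consequences stated in the lemma are immediate. Injectivity of the restriction $E_{T,G}|_{\mathcal{S}'}$ follows by applying the statement to a difference $a[e]-a[e']$ (with $a=1$) of two distinct basis elements of $\mathcal{S}'$, since their images coincide exactly when the difference lies in the kernel. Linear independence of $E_{T,G}(\mathcal{S}')$ in $\widehat{kB_T}(G)$ is just the translation of the statement: a dependence relation among the images $[[e_i]]$ would lift to a combination $\sum a_i[e_i]\in \ker E_{T,G}$, forcing all $a_i=0$ by what we just proved.

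The only step to watch is that Lemma \ref{comblin} requires the $[e_i]$ to be \emph{transitive} $GGT$-sets and pairwise distinct; this is built into the definition of $\mathcal{S}'$, since its elements are of the form $[GGT/D]$ and, as distinct transitive sets correspond to distinct $GGT$-conjugacy classes of subgroups, the $e_i$ are automatically pairwise non-isomorphic. There is no genuine obstacle here: the content of the lemma is packaged in Lemma \ref{comblin}, and the present statement just records the contrapositive in a form convenient for use in Section 3.
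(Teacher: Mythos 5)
Your proposal is correct and matches the paper's approach exactly: the paper states Lemma \ref{Einy} as an immediate consequence of Lemma \ref{comblin} without further argument, and your write-up simply makes that contrapositive deduction (and the resulting injectivity and linear independence) explicit.
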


If $\mathcal{S'}$ is as in the previous lemma, we have that for every $[GGT/D]$ in $\mathcal{S}'$,
\begin{displaymath}
E_{T,\, G}([GGT/D])=[[GGT/D]]\neq 0\textrm{ in } \widehat{kB_T}(G).
\end{displaymath}
Hence, by Lemma \ref{alabouc}, $E_{T,\,G}(\mathcal{S'})$ is contained in $\mathcal{G}$.

\begin{defi}
\label{defls}
We will denote by $\mathcal{L}_{s'}$ the set $E_{T,\,G}(\mathcal{S'})$. The $\mathcal{L}$ stands for \textit{linearly independent} and the $s'$ for \textit{not} (decomposable in terms of the) \textit{star composition}. 
\end{defi}

We will see next that if $G$ and $T$ are finite abelian groups and $k$ is a field of characteristic $0$, then $\mathcal{L}_{s'}$ is a basis of  $\widehat{kB_T}(G)$.



\begin{lema}
Suppose that $D$ is a subgroup of $GGT$ such that  $D$ can be written as $D=E\ast F$, with $E\leqslant GHT$ and $F\leqslant HGT$, for some group $H$. Then $D=E_1\ast F_1$, with $E_1\leqslant GH_1T$, $F_1\leqslant H_1GT$ and $H_1=p_2(E)/k_2(E)$.
\end{lema}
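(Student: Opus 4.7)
The plan is to reduce the problem to a single application of Corollary \ref{alastar} together with the associativity of the star operation.

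First, I would apply Corollary \ref{alastar}(ii) to the subgroup $E\leqslant GHT$ itself. Since $H_1=p_2(E)/k_2(E)$ is precisely the group appearing in that corollary (with the roles of $H$ and $K$ in the statement played here by $G$ and $H$), it gives a factorization
\begin{displaymath}
E = W \ast Z, \qquad W\leqslant G\times H_1\times T,\quad Z\leqslant H_1\times H\times T.
\end{displaymath}
Next, I would invoke the associativity of the star operation (recorded at the start of Section 2) to compute
\begin{displaymath}
D \;=\; E\ast F \;=\; (W\ast Z)\ast F \;=\; W \ast (Z\ast F).
\end{displaymath}
Since $Z\leqslant H_1\times H\times T$ and $F\leqslant H\times G\times T$, the star product $Z\ast F$ is a subgroup of $H_1\times G\times T$. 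So setting $E_1=W$ and $F_1=Z\ast F$ yields a factorization $D=E_1\ast F_1$ of the required form.

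There is essentially no obstacle, since Corollary \ref{alastar} does the heavy lifting; the only thing to check is that the middle group of the corollary's factorization of $E$, which is $p_2(E)/k_2(E)$, is exactly the group $H_1$ that appears in the statement of the lemma, and that associativity legitimately regroups the three-fold composition. If one prefers to avoid appealing to Corollary \ref{alastar}, an equivalent direct construction works: let $\pi\colon p_2(E)\to H_1$ be the quotient map and define $E_1$ as the image of $E$ under $(g,h,t)\mapsto(g,\pi(h),t)$, and $F_1=\{(\pi(h),g,t)\mid h\in p_2(E),\ (h,g,t)\in F\}$. One then verifies directly that $E_1\ast F_1 = D$ by using that any two lifts of the same element of $H_1$ differ by an element of $k_2(E)\leqslant E$, so middle coordinates can always be aligned.
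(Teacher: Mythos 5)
Your primary argument is correct and takes a genuinely different route from the paper. You factor $E$ itself through $H_1=p_2(E)/k_2(E)$ via Corollary \ref{alastar}(ii), writing $E=W\ast Z$ with $W\leqslant G\times H_1\times T$ and $Z\leqslant H_1\times H\times T$, and then regroup $D=(W\ast Z)\ast F=W\ast(Z\ast F)$ using the associativity of $\ast$ recorded at the start of Section 2; since $Z\ast F\leqslant H_1\times G\times T$, this is the desired factorization. The paper instead builds $E_1$ and $F_1$ by hand: it sets $H_0=p_2(E)$ and $K=k_2(E)$, restricts $F$ to $F_0=F\cap(H_0\times G\times T)$, takes $E_1$ and $F_1$ to be the images of $E$ and $F_0$ under the quotients killing $K$ in the middle coordinate, and verifies $D=E_1\ast F_1$ directly --- which is exactly the ``equivalent direct construction'' you sketch at the end (the key step being, as you say, that two lifts of the same element of $H_1$ differ by an element of $\uno\times k_2(E)\times\uno\leqslant E$, so middle coordinates can be aligned). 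Your reduction is slicker, but be aware of what it imports: the paper's only proof of Corollary \ref{alastar} passes through the map $\alpha_{HKT}$ under the standing hypothesis of Section 2 that $k$ is a field of characteristic $0$. Since both the corollary's conclusion and the present lemma are purely group-theoretic statements, this is harmless even though the lemma sits in Section 3 where $k$ is an arbitrary commutative unitary ring --- but the paper's direct construction keeps the lemma self-contained and free of that machinery. Both arguments adapt symmetrically to give the variant $H_1=p_1(F)/k_1(F)$ remarked after the lemma, yours by applying Corollary \ref{alastar}(i) to $F$ and reassociating the other way.
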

\begin{proof}
Let $K=k_2(E)$ and $H_0=p_2(E)$. Consider $E$ as a subgroup of $G H_0 T$ and define
\begin{displaymath}
F_0=\{(h,\, g,\,t)\in F\mid h\in  H_0\}.
\end{displaymath}
It is clear that $F_0\leqslant H_0GT$. Now we take 
\begin{displaymath}
\rho:G H_0 T\rightarrow \frac{G H_0 T}{\uno K \uno}\cong GH_1T\quad
\textrm{ and }\quad
\lambda:H_0 G T\rightarrow \frac{H_0 G T}{K \uno \uno}\cong H_1GT.
\end{displaymath}
Let $E_1$ be the image of $E$ under $\rho$ and $F_1$ be the image of $F_0$ under $\lambda$.

Let us show that $D=E_1\ast F_1$. Let $(g,\, g',\, t)\in D$. Since $D=E\ast F$, there exists $h\in H$ such that $(g,\, h,\, t)\in E$ and $(h,\, g',\, t)\in F$. Hence $h\in p_2(E)=H_0$ and $(h,\, g',\, t)\in F_0$. So $(g,\, hK,\, t)\in E_1$, $(hK,\, g',\, t)\in F_1$ and $(g,\, g',\,t)\in E_1\ast F_1$. Now suppose that $(a,\, a',\, t)\in E_1\ast F_1$. Then there exists $fK\in H_1$ such that $(a,\, fK,\, t)\in E_1$ and $(fK,\, a',\, t)\in F_1$. This means that there exist $h$ and $h'$ in $p_2(E)$ with $hK=fK=h'K$ and such that $(a,\, h,\, t)\in E$ and $(h',\, a',\, t)\in F_0$.  Now, since $h^{-1}h'\in K=k_2(E)$, we have $(1,\, h^{-1}h',\,1)\in E$ and so $(a,\, h',\, t)\in E$. Also, $(h',\, a',\, t)\in F$. So we have $(a,\, a',\,t)\in E\ast F=D$.
\end{proof}

Remark that the previous lemma also holds if we take 
$H_1=p_1(F)/k_1(F)$. 

\begin{teo}
\label{iso2abe}
Let $k$ be a field of characteristic $0$ and  $G$ and $T$ be finite abelian groups. Then $\mathcal{L}_{s'}$ is a basis of $\widehat{kB_T}(G)$ and we have an isomorphism of $k$-algebras
\begin{displaymath}
(\widehat{kB_T}(G), \times_G^d)\cong (\widehat{k\mathscr{S}}_{GGT}, \ast_G^\kappa).
\end{displaymath}
\end{teo}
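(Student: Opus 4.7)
The plan is to deduce the theorem from Remark \ref{iso1abe} by showing that $\alpha_{GGT}$ descends to an isomorphism $\widehat{\alpha}_{GGT}$ of the essential quotients. I begin by describing $\mathcal{J}$ explicitly: since $\kappa(E,F)=|k_2(E)\cap k_1(F)|/|K|$ is always a nonzero element of $k$ in characteristic zero, every generator $E\ast_K^\kappa F$ of $\mathcal{J}$ with $|K|<|G|$ is a nonzero scalar times the single subgroup $E\ast F$ of $GGT$. Hence $\mathcal{J}$ is precisely the $k$-span of the subgroups of $GGT$ that admit a star decomposition through some intermediate group of order $<|G|$, and consequently $\widehat{k\mathscr{S}}_{GGT}$ has a $k$-basis indexed by the non-decomposable subgroups of $GGT$, that is, by $\mathcal{S}'$.

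The main work is then to show $\alpha_{GGT}(\mathcal{I}_T(G))=\mathcal{J}$. The inclusion $\subseteq$ is immediate from Lemma \ref{abre} applied to the generators of $\mathcal{I}_T(G)$. For $\supseteq$, I take any subgroup $D=E\ast F\leqslant GGT$ with $E\leqslant GKT$, $F\leqslant KGT$ and $|K|<|G|$, and use the lemma preceding the theorem to replace the decomposition by $D=E_1\ast F_1$ through $H_1=p_2(E)/k_2(E)$, where by construction $p_2(E_1)=H_1$ and $k_2(E_1)=\uno$. The crucial observation, which is the heart of the argument, is that $H_1$ is automatically abelian: for any $(g_i,h_i,t_i)\in E_1$, the commutator in $G\times H_1\times T$ reduces to $(1,[h_1,h_2],1)$ since $G$ and $T$ are abelian, and, belonging to the subgroup $E_1$, it forces $[h_1,h_2]\in k_2(E_1)=\uno$ for all $h_1,h_2\in H_1=p_2(E_1)$. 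Once $H_1$ is known to be abelian, $GH_1T$ and $H_1GT$ are abelian, so by Remark \ref{iso1abe} the maps $\alpha_{GH_1T}$ and $\alpha_{H_1GT}$ are surjective; lifting $E_1$ and $F_1$ to elements $a\in kB_T(G\times H_1)$ and $b\in kB_T(H_1\times G)$, Lemma \ref{abre} then yields $\alpha_{GGT}(a\times_{H_1}^d b)=\kappa(E_1,F_1)\,D$, which places $D$ (up to the nonzero scalar $\kappa(E_1,F_1)$) in $\alpha_{GGT}(\mathcal{I}_T(G))$ since $|H_1|\leqslant|K|<|G|$.

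Given $\alpha_{GGT}(\mathcal{I}_T(G))=\mathcal{J}$, the induced map $\widehat{\alpha}_{GGT}$ is a $k$-algebra isomorphism. Since $\alpha_{GGT}([GGT/D])=(|GGT|/|D|)\,D$ in the abelian case, this isomorphism carries $\mathcal{L}_{s'}$ bijectively, up to nonzero scalars, onto the basis $\{[D]:D\in\mathcal{S}'\}$ of $\widehat{k\mathscr{S}}_{GGT}$; combined with the linear independence of $\mathcal{L}_{s'}$ from Lemma \ref{Einy}, this will show that $\mathcal{L}_{s'}$ is a $k$-basis of $\widehat{kB_T}(G)$. The main anticipated obstacle is precisely the commutator argument establishing that $H_1$ is abelian; this is the step where the hypothesis that both $G$ and $T$ are abelian is genuinely essential, and everything else is an organization of the structural results from Sections 1 and 2.
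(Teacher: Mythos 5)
Your argument is correct and follows the same overall strategy as the paper: use the lemma preceding the theorem to replace an arbitrary decomposition $D=E\ast F$ through a group of order $<|G|$ by one through $H_1=p_2(E)/k_2(E)$, observe that $H_1$ is abelian, and conclude that $[GGT/D]$ itself lies in $\mathcal{I}_T(G)$ up to a nonzero scalar, whence both the basis statement (via Lemma \ref{Einy}) and $\alpha_{GGT}(\mathcal{I}_T(G))=\mathcal{J}$. You deviate in two places, both legitimately. First, you prove $H_1$ abelian by the commutator computation $[(g_1,h_1,t_1),(g_2,h_2,t_2)]=(1,[h_1,h_2],1)\in E_1$, which forces $[h_1,h_2]\in k_2(E_1)=\uno$; the paper instead quotes Goursat's lemma to identify $H_1$ with a subquotient of the abelian group $G\times T$. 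These are the same fact, and your computation is self-contained (indeed it works already for $E$ itself, giving $p_2(E)'\leqslant k_2(E)$). Second, where the paper computes $[GH_1T/E_1]\times_{H_1}^d[H_1GT/F_1]=m[GGT/(E_1\ast F_1)]$ explicitly with the Mackey formula of Lemma \ref{mackey}, using that $H_1\times T$ abelian collapses all double-coset conjugates of $F_1$, you transport the computation to the star algebra: you lift $E_1$ and $F_1$ through $\alpha_{GH_1T}$ and $\alpha_{H_1GT}$ and use Lemma \ref{abre} together with the injectivity of $\alpha_{GGT}$. This keeps the scalar bookkeeping tidy, at the small cost of invoking Remark \ref{iso1abe} for the rectangular groups $G\times H_1\times T$ and $H_1\times G\times T$ rather than the square case it literally states; that is a harmless extension, since the only input is that the ambient group is abelian, so $\alpha$ sends each transitive class to a nonzero multiple of a single subgroup and is therefore bijective. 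Unwound, your lifting step is exactly the paper's Mackey computation, so the two proofs are equivalent in content.
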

\begin{proof}
Suppose that $D$ can be written as $D=E\ast F$, with $E\leqslant GHT$  and $F\leqslant HGT$ for some group $H$ such that $|H|<|G|$. By the previous lemma, $D=E_1\ast F_1$ with $E_1\leqslant GH_1T$  and $F_1\leqslant H_1GT$, where $H_1=p_2(E)/k_2(E)$. By Goursat's lemma, $H_1$ is isomorphic to a subquotient of $G\times T$, hence it is an abelian group. Also, since it is a subquotient of $H$, it satisfies $|H_1|<|G|$.



Consider the elements $[GH_1T/E_1]\in kB_T(G\times H_1)$ and $[H_1GT/F_1]\in kB_T(H_1\times G)$. Composing them gives
\begin{displaymath}
(GH_1T)/E_1 \times_{H_1}^d (H_1GT)/F_1 \cong \bigsqcup_{(h,\, t)} (GGT)/(E_1 \ast ^{(h,\, 1,\,t)}F_1) 
\end{displaymath}
where $(h,\, t)$ runs through a set of representatives of $p_{2,3}(E_1)\backslash H_1\times T / p_{1,3}(F_1)$. Since $H_1$ and $T$ are abelian, we have
\begin{displaymath}
[(GH_1T)/E_1] \times_{H_1}^d [(H_1GT)/F_1]=m[(GGT)/(E_1 \ast F_1)], 
\end{displaymath}
in $kB_T(G\times G)$, where $m=[H_1\times T:p_{2,3}(E_1)p_{1,3}(F_1)]$. 

This proves that $[[GGT/D]]$ is not zero in $\widehat{kB_T}(G)$ if and only if $[[GGT/D]]$ is in $\mathcal{L}_{s'}$, and hence that $\mathcal{L}_{s'}$ is a basis of $\widehat{kB_T}(G)$. It also proves that $\alpha_{GGT}(\mathcal{I}_T(G))=\mathcal{J}$, the ideal introduced in Definition \ref{stares}. Hence, by Remark \ref{iso1abe}, we have $(\widehat{kB_T}(G), \times_G^d)\cong (\widehat{k\mathscr{S}}_{GGT}, \ast_G^\kappa)$.
\end{proof}


\subsection{General setting}

Now we come back to the general case and suppose that $k$ is a commutative unitary ring. We will describe some elements in $\mathcal{L}_{s'}$.

Suppose that $[GGT/D]$ is as in Definition \ref{notzeroimplies} and that $A$ is of the form $A=G\times T_0$  for some $T_0\leqslant T$. Then $f(g,\, t)=f(g,\, 1)f(1,\, t)$ and the restriction of $f$ to $G\times\uno$ defines an automorphism  $\sigma$ of $G$. This is because $k_1(A)=G$ and, as said at the beginning of the section, $Ker f\cap (k_1(A)\times \uno)=\uno$. On the other hand, the restriction of $f$ to $T_0$ defines  a group homomorphism $\alpha:T_0\rightarrow Z(G)$. In this case, we write $D$ as:
\begin{displaymath}
D_{\sigma,\, \alpha,\, T_0}=\{(\alpha(t)\sigma(g),\, g,\, t)\mid (g,\, t)\in G\times T_0\}.
\end{displaymath}

\begin{defi}
\label{deflp}
The set of transitive elements $[[GGT/D_{\sigma,\, \alpha,\, T_0}]]$ in $\widehat{kB_T}(G)$ as above will be denoted by $\mathcal{L}_p$. The $p$ here stands for (decomposable as a) \textit{direct product}.
\end{defi}

\begin{prop}
\label{lpinls}
The set $\mathcal{L}_p$ is contained in $\mathcal{L}_{s'}$.
\end{prop}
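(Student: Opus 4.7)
The plan is to show, by contraposition, that for any triple $(\sigma,\,\alpha,\,T_0)$ the subgroup $D = D_{\sigma,\,\alpha,\,T_0}$ cannot be written as $D = E\ast F$ with $E\leqslant GHT$, $F\leqslant HGT$ and $|H| < |G|$. Once this is established, $[GGT/D]$ lies in the set $\mathcal{S}'$ of Lemma~\ref{Einy}, so $[[GGT/D]] = E_{T,\,G}([GGT/D])$ lies in $\mathcal{L}_{s'}$, yielding $\mathcal{L}_p \subseteq \mathcal{L}_{s'}$.

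So assume such a decomposition exists. A direct inspection of the parametrization of $D_{\sigma,\,\alpha,\,T_0}$ shows that $k_1(D) = \uno$, since $(g,\,1,\,1)\in D$ forces $g = \alpha(1)\sigma(1) = 1$. The inclusion lemma at the end of Subsection~1.1 then yields $k_1(E) \subseteq k_1(E\ast F) = \uno$. The key step is to pass to the ``slice at $t = 1$'' of $E$: set
\[
E_1 = \{(g,\,h)\in G\times H \mid (g,\,h,\,1)\in E\},
\]
a subgroup of $G\times H$ satisfying $k_1(E_1) \subseteq k_1(E) = \uno$. For every $g \in G$ the element $(\sigma(g),\,g,\,1)$ lies in $D = E\ast F$ (take $t = 1$ in the parametrization, using $\alpha(1) = 1$), so by the very definition of the star operation there exists $h \in H$ with $(\sigma(g),\,h,\,1)\in E$, whence $(\sigma(g),\,h)\in E_1$. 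Since $\sigma$ is an automorphism of $G$, this forces $p_1(E_1) = G$.

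Applying Goursat's lemma to $E_1 \leqslant G \times H$ with $p_1(E_1) = G$ and $k_1(E_1) = \uno$ produces an isomorphism $G \cong p_2(E_1)/k_2(E_1)$; in particular $|G| \leq |p_2(E_1)| \leq |H|$, contradicting $|H| < |G|$. I do not foresee a serious obstacle: the argument merely exploits the fact that the full graph $\Delta_\sigma(G)$ is contained in the $t = 1$ layer of $D$, combined with Goursat's lemma. The only piece to check carefully is that $E_1$ genuinely inherits the triviality of the first kernel and the surjectivity of the first projection, and both of these follow at once from $k_1(E) = \uno$ and from $\sigma$ being an automorphism.
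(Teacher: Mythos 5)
Your proof is correct. Both you and the paper argue by contradiction from a putative factorization $D_{\sigma,\,\alpha,\,T_0}=E\ast F$ with $|H|<|G|$, and both exploit the same key observation: the graph $\Delta_\sigma(G)$ lives entirely in the $t=1$ layer of $D$, so one should pass to the slices $k_{1,2}(E)$ (your $E_1$) and $k_{1,2}(F)$. From there the executions diverge. The paper proves the full equality $\Delta_\sigma(G)=k_{1,2}(E)\ast k_{1,2}(F)$ --- the reverse inclusion requiring the uniqueness of $g$ and $t$ in the parametrization of $D_{\sigma,\,\alpha,\,T_0}$ --- and then invokes Lemma 2.3.22 of Bouc's book to conclude that $p_1(M)/k_1(M)\cong G$ is a subquotient of $H$. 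You only need the forward inclusion (every $(\sigma(g),\,g,\,1)$ factors through some $h\in H$ inside $E$), which gives $p_1(E_1)=G$, together with $k_1(E_1)=k_1(E)\subseteq k_1(E\ast F)=\uno$; Goursat applied to the single subgroup $E_1\leqslant G\times H$ then yields $|G|\le|H|$ directly. Your route is leaner: it dispenses with the reverse containment and with the external lemma on star products of sections, at the cost of nothing --- the inclusion $k_1(E)\subseteq k_1(E\ast F)$ you quote from the lemma at the end of Subsection~1.1 is also immediate by hand. Both proofs are sound; yours is the more self-contained of the two.
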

\begin{proof}
A particular case of this proposition was proven in  Lemma 4.10 of \cite{lachica}. 

Let $\alpha$ and $\sigma$ be as before and suppose  $D_{\sigma,\, \alpha,\, T_0}=E\ast F$ for some $E\leqslant GHT$, $F\leqslant HGT$ and $H$ a group such that $|H|<|G|$. We claim that in this case $\Delta_{\sigma}(G)= k_{1,\, 2}(E)\ast k_{1,\, 2}(F)$. This will give a contradiction since, if we call $M$ the group on the right-hand side of this equality, then by Lemma 2.3.22 in \cite{biset}, $p_1(M)/k_1(M)$ is isomorphic to a subquotient of $H$.

Let $(\sigma(g),\, g)\in \Delta_{\sigma}(G)$.  Since $(\sigma(g),\, g,\, 1)$ is in $D_{\sigma,\, \alpha,\, T_0}$, there exists $h\in H$ such that $(\sigma(g),\, h,\, 1)\in E$ and $(h,\, g,\, 1)\in F$, thus 
$(\sigma(g),\, h)\in k_{1,\, 2}(E)$ and $(h,\, g)\in k_{1,\, 2}(F)$. Hence $(\sigma(g),\, g)\in k_{1,\, 2}(E)\ast k_{1,\, 2}(F)$. Now let $(g_1,\,g_2)\in k_{1,\, 2}(E)\ast k_{1,\, 2}(F)$. Then there exists $h\in H$ such that $(g_1,\, h)\in k_{1,\, 2}(E)$ and $(h,\, g_2)\in k_{1,\, 2}(F)$, thus $(g_1,\, h,\, 1)\in E$ and $(h,\, g_2,\, 1)\in F$. This means that $(g_1,\, g_2,\, 1)$ is in $D_{\sigma,\, \alpha,\, T_0}$. But given an element $(\sigma(g)\alpha(t),\, g,\, t)$ in $D_{\sigma,\, \alpha,\, T_0}$, the elements $g$ and $t$ are uniquely determined. This means that $g_1=\sigma(g_2)$ and $(g_1,\, g_2)\in \Delta_{\sigma}(G)$. 
\end{proof}

Next we prove some properties of the elements of $\mathcal{L}_p$.

\begin{lema}
\label{sonigual}
Let $D_{\sigma,\, \alpha,\, T_0}$ and $D_{\tau,\, \beta,\, T_1}$ be as before, with $\sigma$ and $\tau$ automorphisms of $G$ and  $\alpha: T_0\rightarrow Z(G)$ and $\beta: T_1\rightarrow Z(G)$ group homomorphisms. Then $[GGT/D_{\sigma,\, \alpha,\, T_0}]=[GGT/D_{\tau,\,\beta,\, T_1}]$ in $kB_T(G\times G)$ if and only if there exists $(g,\, t)\in G\times T$ such that $T_0= ^t\!\!T_1$, $\sigma=\tau c_g$ and $\alpha=\beta c_{t^{-1}}$.
\end{lema}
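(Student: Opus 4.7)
The plan is to pass from isomorphism of the transitive sets $(GGT)/D_{\sigma,\alpha,T_0}$ and $(GGT)/D_{\tau,\beta,T_1}$ to the standard statement that the stabilizers are conjugate in $GGT$. So the task reduces to showing that $D_{\sigma,\alpha,T_0}$ and $D_{\tau,\beta,T_1}$ are conjugate in $GGT$ if and only if the three stated conditions on $T_0,\sigma,\alpha$ versus $T_1,\tau,\beta$ hold.

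For the forward direction, I would pick a conjugator $(a,b,s)\in GGT$ with $D_{\sigma,\alpha,T_0}={}^{(a,b,s)}D_{\tau,\beta,T_1}$ and compute the right-hand side explicitly. Using that $\beta(t_1)\in Z(G)$ commutes with $a$, one gets
\[
{}^{(a,b,s)}D_{\tau,\beta,T_1}=\bigl\{\bigl(\beta(s^{-1}t's)\,c_a\tau c_{b^{-1}}(g'),\,g',\,t'\bigr)\,\bigm|\,(g',t')\in G\times {}^sT_1\bigr\}.
\]
Since in both $D_{\sigma,\alpha,T_0}$ and this subgroup the pair $(g',t')$ determines the first coordinate uniquely, equating ranges gives $T_0={}^sT_1$, while specialising first $g'=1$ and then $t'=1$ yields $\alpha(t')=\beta(s^{-1}t's)$ on $T_0$ and $\sigma=c_a\tau c_{b^{-1}}$ on $G$. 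Setting $t:=s$ immediately gives $\alpha=\beta c_{t^{-1}}$, and the identity $c_a\tau c_{b^{-1}}(x)=\tau(g)\tau(x)\tau(g)^{-1}$ with the choice $g:=\tau^{-1}(a)b^{-1}$ shows that $\sigma=\tau c_g$ in the sense of the lemma.

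For the converse, given $(g,t)\in G\times T$ with the three stated relations, I would exhibit the explicit conjugator $(a,b,s):=(\tau(g),\,1,\,t)$ and verify directly that
\[
{}^{(\tau(g),1,t)}D_{\tau,\beta,T_1}=D_{\sigma,\alpha,T_0},
\]
invoking $\tau c_g=c_{\tau(g)}\tau$ on the $G$-factor, centrality of $\beta(t_1)$ plus $\alpha=\beta c_{t^{-1}}$ on the $Z(G)$-factor, and $T_0={}^tT_1$ to identify the parameter set.

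The whole argument is bookkeeping rather than ideas; the only mildly delicate point is reconciling the two equivalent formulations \emph{$\sigma$ and $\tau$ differ by an inner automorphism}, namely $\sigma=c_a\tau c_{b^{-1}}$ (which falls out of the conjugation computation) and $\sigma=\tau c_g$ (the form stated in the lemma), and this is resolved cleanly by $\tau c_g=c_{\tau(g)}\tau$ with $g=\tau^{-1}(a)b^{-1}$.
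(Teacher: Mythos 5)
Your proof is correct and follows essentially the same route as the paper: reduce the equality of classes to conjugacy of the stabilizers, compute the conjugate ${}^{(a,b,s)}D_{\tau,\beta,T_1}$ explicitly, specialise to read off $T_0={}^tT_1$, $\alpha=\beta c_{t^{-1}}$ and $\sigma=\tau c_g$ with $g=\tau^{-1}(a)b^{-1}$, and for the converse exhibit the same explicit conjugator $(\tau(g),1,t)$. No issues.
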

\begin{proof}
If $[GGT/D_{\sigma,\, \alpha,\, T_0}]=[GGT/D_{\tau,\,\beta,\, T_1}]$ in $kB_T(G\times G)$, then there exist $g_1$, $g_2$ in $G$ and $t$ in $T$ such that $D_{\sigma,\, \alpha,\, T_0}=\,^{(g_1,\, g_2,\, t)}D_{\tau,\, \beta,\, T_1}$. This easily implies that $T_0=\, ^tT_1$, that $\alpha (t')=\beta (t^{-1}t't)$ and that 
\begin{displaymath}
\sigma (g')=g_1\tau(g_2^{-1}g'g_2)g_1^{-1}=\tau(\tau^{-1}(g_1)g_2^{-1}g'g_2\tau^{-1}(g_1^{-1})),
\end{displaymath}
for $t'\in T_0$ and $g'\in G$. Taking $g=\tau^{-1}(g_1)g_2^{-1}$, we have $\sigma=\tau c_g$.

Now suppose that $T_0=\,^tT_1$ for some $t\in T$, $\sigma=\tau c_g$ for some $g\in G$ and
$\alpha=\beta c_{t^{-1}}$. Then, it is straightforward to see that
\begin{displaymath}
D_{\sigma,\, \alpha,\, T_0}=\,^{(\tau(g),\, 1,\, t)}D_{\tau,\, \beta,\, T_1}, 
\end{displaymath}
hence $[GGT/D_{\sigma,\, \alpha,\, T_0}]=[GGT/D_{\tau,\,\beta,\, T_1}]$ in $kB_T(G\times G)$.
\end{proof}




\begin{lema}
\label{composicion}
For $D_{\sigma,\, \alpha,\, T_0}$ and $D_{\tau,\, \beta,\, T_1}$ as before, we have
\begin{displaymath}
(GGT/D_{\sigma,\, \alpha,\, T_0})\times_G^d (GGT/D_{\tau,\, \beta,\, T_1})\cong \bigsqcup_{t\in[T_0\backslash T/T_1]}GGT/D_{\sigma\tau,\, \alpha\cdot(\sigma\,{^t\!\beta}),\,T_0\cap { }^tT_1},
\end{displaymath}
where, in the group morphism $\alpha\cdot(\sigma\,{ }^t\!\beta): T_0\cap { }^tT_1\rightarrow Z(G)$, the dot $\cdot$ represents the product in $Z(G)$. 
\end{lema}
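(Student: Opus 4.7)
The plan is to apply the Mackey formula of Lemma \ref{mackey} directly, with $L=K=G$, $E=D_{\sigma,\,\alpha,\,T_0}$ and $D=D_{\tau,\,\beta,\,T_1}$, and then identify each summand on the right-hand side with one of the claimed subgroups. The work splits into three essentially computational tasks: finding a system of double coset representatives, computing the conjugate of $D_{\tau,\,\beta,\,T_1}$ by each representative, and computing the resulting star product.

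First I would compute the projections that govern the double cosets. From the defining formula one has $p_{2,3}(D_{\sigma,\,\alpha,\,T_0})=G\times T_0$, while $p_{1,3}(D_{\tau,\,\beta,\,T_1})=G\times T_1$, since $\tau$ is an automorphism and hence $g\mapsto \tau(g)\beta(t)$ is surjective onto $G$ for each $t$. Therefore
\begin{displaymath}
p_{2,3}(E)\backslash G\times T/p_{1,3}(D) \;=\; (G\times T_0)\backslash (G\times T)/(G\times T_1)
\end{displaymath}
collapses, on the $G$-factor, to a single orbit, so a complete set of representatives is given by the elements $(1,\,t)$ with $t$ running through $[T_0\backslash T/T_1]$. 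Mackey's formula then gives a disjoint union indexed precisely by $[T_0\backslash T/T_1]$, with summand corresponding to $t$ being $(GGT)/\bigl(E\ast {}^{(1,\,1,\,t)}D\bigr)$.

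Next I would compute the conjugate and the star product explicitly. Conjugation by $(1,1,t)$ only affects the third coordinate, sending $(\tau(g)\beta(s),\,g,\,s)$ to $(\tau(g)\beta(s),\,g,\,tst^{-1})$; relabeling $s'=tst^{-1}$ and setting ${}^t\!\beta(s'):=\beta(t^{-1}s't)$ yields
\begin{displaymath}
{}^{(1,\,1,\,t)}D_{\tau,\,\beta,\,T_1}=D_{\tau,\,{}^t\!\beta,\,{}^tT_1}.
\end{displaymath}
Then a triple $(g_1,g_2,s)$ belongs to $D_{\sigma,\,\alpha,\,T_0}\ast D_{\tau,\,{}^t\!\beta,\,{}^tT_1}$ exactly when there exists $x\in G$ with $(g_1,x,s)\in D_{\sigma,\,\alpha,\,T_0}$ and $(x,g_2,s)\in D_{\tau,\,{}^t\!\beta,\,{}^tT_1}$. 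The first condition forces $s\in T_0$ and $g_1=\alpha(s)\sigma(x)$; the second forces $s\in {}^tT_1$ and $x=\tau(g_2)\,{}^t\!\beta(s)$. Substituting and using that ${}^t\!\beta(s)\in Z(G)$ (so it commutes with everything and $\sigma$ restricts to an automorphism of $Z(G)$) gives $g_1=\alpha(s)\,\sigma({}^t\!\beta(s))\,\sigma\tau(g_2)=\bigl(\alpha\cdot(\sigma\,{}^t\!\beta)\bigr)(s)\,\sigma\tau(g_2)$, with $s$ ranging over $T_0\cap {}^tT_1$, which is the description of $D_{\sigma\tau,\,\alpha\cdot(\sigma\,{}^t\!\beta),\,T_0\cap{}^tT_1}$.

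There is no serious obstacle here; the statement is really a bookkeeping exercise once Mackey's formula is invoked. The only mildly delicate point is verifying that $\alpha\cdot(\sigma\,{}^t\!\beta)$ is a well-defined group homomorphism into $Z(G)$, which is immediate since $\alpha$, ${}^t\!\beta$, and $\sigma\,{}^t\!\beta$ all land in the abelian group $Z(G)$ (using that $\sigma(Z(G))=Z(G)$), so their pointwise product is again a homomorphism into $Z(G)$.
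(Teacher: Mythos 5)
Your proposal is correct and follows essentially the same route as the paper: invoke the Mackey formula of Lemma \ref{mackey}, identify the double cosets $p_{2,3}(D_{\sigma,\alpha,T_0})\backslash G\times T/p_{1,3}(D_{\tau,\beta,T_1})$ with $T_0\backslash T/T_1$, observe that ${}^{(1,1,t)}D_{\tau,\beta,T_1}=D_{\tau,{}^t\beta,{}^tT_1}$, and compute the star product directly. The computations (including the use of $\sigma(Z(G))=Z(G)$ to commute the central factors) match the paper's argument step for step.
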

\begin{proof}
By Lemma \ref{mackey} we have
\begin{displaymath}
(GGT/D_{\sigma,\, \alpha,\, T_0})\times_G^d (GGT/D_{\tau,\, \beta,\, T_1})\cong \bigsqcup_{(g,\, t)}GGT/(D_{\sigma,\, \alpha,\, T_0}\ast { }^{(g,\, 1,\, t)}D_{\tau,\, \beta,\, T_1})
\end{displaymath}
where $(g,\, t)$ runs through a set of representatives of $p_{2,\, 3}(D_{\sigma,\, \alpha,\, T_0})\backslash G\times T/p_{1,\, 3}(D_{\tau,\, \beta,\, T_1})$. Now, since $\alpha:T_0\rightarrow Z(G)$ and $\beta: T_1\rightarrow Z(G)$, then $p_{2,\, 3}(D_{\sigma,\, \alpha,\, T_0})=G\times T_0$ and $p_{1,\, 3}(D_{\tau,\, \beta,\, T_1})=G\times T_1$. So, $p_{2,\, 3}(D_{\sigma,\, \alpha,\, T_0})\backslash G\times T/p_{1,\, 3}(D_{\tau,\, \beta,\, T_1})$ is in one-to-one corres\-pondence with $T_0\backslash T/T_1$. Also, as in Lemma \ref{sonigual}, we see that ${ }^{(1,\, 1,\, t)}D_{\tau,\, \beta,\, T_1}=D_{\tau,\, { }^t\beta,\,{ }^tT_1}$. Hence,
\begin{displaymath}
(GGT/D_{\sigma,\, \alpha,\, T_0})\times_G^d (GGT/D_{\tau,\, \beta,\, T_1})\cong \bigsqcup_{t\in[T_0\backslash T/T_1]}GGT/(D_{\sigma,\, \alpha,\, T_0}\ast D_{\tau,\, { }^t\beta,\, { }^tT_1}).
\end{displaymath} 
In general, $D_{\sigma,\, \alpha,\, T_0}\ast D_{\tau,\, \gamma,\, T_2}$ is equal to
\begin{displaymath}
\{(g_1,\, g_2,\, t')\in GGT\mid \exists g_0\in G\textrm{ s. t. } (g_1,\, g_0,\, t')\in D_{\sigma,\, \alpha,\, T_0},\, (g_0,\, g_2,\, t')\in D_{\tau,\, \gamma,\, T_2}\}.
\end{displaymath}
This implies that $t'$ is in $T_0\cap T_2$, that $g_0=\tau(g_2)\gamma(t')$ and finally that  $g_1$ is equal to $\sigma\tau(g_2)\sigma\gamma(t')\alpha(t')$. That is,
\begin{displaymath}
D_{\sigma,\, \alpha,\, T_0}\ast D_{\tau,\, \gamma,\, T_2}=D_{\sigma\tau,\, \alpha\cdot(\sigma\gamma),\,T_0\cap T_2 },
\end{displaymath}
with $\alpha\cdot(\sigma\gamma): T_0\cap T_2\rightarrow Z(G)$. So, we have
\begin{displaymath}
(GGT/D_{\sigma,\, \alpha,\, T_0})\times_G^d (GGT/D_{\tau,\, \beta,\, T_1})\cong \bigsqcup_{t\in[T_0\backslash T/T_1]}GGT/D_{\sigma\tau,\, \alpha\cdot(\sigma\, { }^t\!\beta),\,T_0\cap { }^tT_1},
\end{displaymath}
where for each $t\in [T_0\backslash T/T_1]$, $\alpha\cdot(\sigma\, { }^t\!\beta): T_0\cap { }^tT_1\rightarrow Z(G)$.
\end{proof}



The previous lemma says that, given groups $G$ and $T$,  the $k$-submodule of $\widehat{kB_T}(G)$ generated by $\mathcal{L}_p$ is actually a subalgebra, the identity element of $\widehat{kB_T}(G)$ corresponds to $[GGT/D_{id,\,\uno,\, T}]$, where $id:G\rightarrow G$ is the identity morphism and $\uno :T\rightarrow Z(G)$ is the trivial morphism. We will 
use the  monomial Burnside ring, $kB^{Z(G)}(T)$, to describe this algebra. 

Let us fix the notation $Z$ for $Z(G)$. 

Let $U$ be a $Z$-fibred $T$-set. By considering $U$ as a left $Z$-set and $G$ as a right $Z$-set, we can make the composition $G\times_ZU$. 
Next, given $\sigma$, an automorphism of $G$, we turn $G\times_ZU$ into a $GGT$-set in the following way. Let $[g,\, u]$ be the class in $G\times_ZU$ of $(g, u)\in G\times U$ and let $(g_1,\,g_2,\,t)\in GGT$, then
\begin{displaymath}
(g_1,\,g_2,\,t)[g,\, u]=[g_1g\sigma(g_2)^{-1},\,(t,\,1)u].
\end{displaymath}
Since the elements of $Z$ commute with those of $T$ in $T\times Z$ and, of course, with those of $G$, this action is well defined. We denote the class of this $GGT$-set in $kB_T(G\times G)$ by $[U,\, \sigma]$ .

\begin{lema}
\label{corres}
Let $\sigma$ be an automorphism of $G$, let $X$ be a subgroup of $T$ and let $\alpha:X\rightarrow Z$ be a group homomorphism. Recall that $X_{\alpha}=\{(x,\, \alpha(x))\mid x\in X\}$ and that $(X,\, \alpha)$ denotes the $Z$-fibred $T$-set $(T\times Z)/X_{\alpha}$.  Then $[(X,\,\alpha),\, \sigma]=[GGT/D_{\sigma,\, \alpha,\, X}]$ in $B_T(G\times G)$. 
\end{lema}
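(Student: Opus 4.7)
The idea is to exhibit an explicit $GGT$-equivariant bijection between $G\times_Z (T\times Z)/X_\alpha$ (with the action defined just above) and $GGT/D_{\sigma,\alpha,X}$; the cleanest route is to choose a base point, identify its stabilizer, and check transitivity.

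First, I would fix the base point $e=[1,\,(1,1)X_\alpha]\in G\times_Z (T\times Z)/X_\alpha$. A direct unfolding of the action gives
\[
(g_1,g_2,t)\cdot e \;=\; \bigl[g_1\sigma(g_2)^{-1},\,(t,1)X_\alpha\bigr].
\]
Next I would compute the stabilizer of $e$. The defining tensor relation $[gz,u]=[g,(1,z)u]$ for $z\in Z$ rephrases the equation $(g_1,g_2,t)\cdot e=e$ as: there exists $z\in Z$ with $g_1\sigma(g_2)^{-1}=z$ and $(1,z)(t,1)X_\alpha=(1,1)X_\alpha$ in $(T\times Z)/X_\alpha$. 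The latter unfolds to $(t,z)\in X_\alpha$, hence $t\in X$ and $\alpha(t)=z$, which in turn forces $g_1=\alpha(t)\sigma(g_2)$. Thus the stabilizer of $e$ is exactly $D_{\sigma,\alpha,X}$.

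For transitivity, I would observe that any element $[g,\,(t,z)X_\alpha]$ of $G\times_Z U$ equals $[gz,\,(t,1)X_\alpha]$ by the tensor relation (using $(1,z)(t,1)X_\alpha=(t,z)X_\alpha$), and this in turn equals $(gz,1,t)\cdot e$. So the $GGT$-orbit of $e$ is all of $G\times_Z(T\times Z)/X_\alpha$, yielding the desired isomorphism $G\times_Z (T\times Z)/X_\alpha\cong GGT/D_{\sigma,\alpha,X}$, which is the stated identity in $B_T(G\times G)$.

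The only place where one must tread carefully is the interplay between the right $Z$-action on $G$ used to form the tensor product and the left $Z$-action on $U$ coming from $z\mapsto(1,z)\in T\times Z$: all the inverses in the stabilizer computation arise from juggling these two actions, and one has to verify that the condition $(t,z)\in X_\alpha$ (rather than its inverse $(t^{-1},z^{-1})\in X_\alpha$) is the correct orientation. Once this bookkeeping is settled, no genuine obstacle remains.
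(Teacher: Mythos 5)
Your proposal is correct and follows essentially the same route as the paper: both arguments take the base point $[1,\,(1,1)X_\alpha]$, show transitivity via $[g,\,(t,z)X_\alpha]=[gz,\,(t,1)X_\alpha]=(gz,1,t)\cdot[1,\,(1,1)X_\alpha]$, and identify the stabilizer with $D_{\sigma,\alpha,X}$ by unwinding the tensor relation to the condition $(t,z)\in X_\alpha$. The bookkeeping of the two $Z$-actions that you flag is handled correctly.
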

\begin{proof}
Notice first that $G\times_Z(X,\,\alpha)$ is a transitive $GGT$-set. This is because for $g\in G$ and $(t,\,z)\in T\times Z$, we have
\begin{displaymath}
[g,\,(t,\,z)X_{\alpha}]=[gz,\, (t,\,1)X_{\alpha}]=(gz,\,1,\,t)[1,\, (1,\,1)X_{\alpha}].
\end{displaymath}
Now suppose that $(g_1,\,g_2,\,t)\in GGT$ is in the stabilizer of $[1,\, (1,\,1)X_{\alpha}]$. Then there exists $z\in Z$ such that
\begin{displaymath}
(g_1\sigma(g_2)^{-1},\, (t,\,1)X_{\alpha})=(z,\, (1,\, z^{-1})X_\alpha).
\end{displaymath}
Hence $g_1\sigma(g_2)^{-1}=z$ and $(t,\,1)X_{\alpha}=(1,\,z^{-1})X_{\alpha}$. This implies $(t,\,z)\in X_{\alpha}$ and $(g_1,\, g_2,\,t)\in D_{\sigma,\, \alpha,\, X}$. The other inclusion is clear.
\end{proof}

The previous lemma prompt us to consider, first the ring $kB^Z(T)$, and then the following $k$-algebra, which we denote by $kB^Z(T)\rtimes Out(G)$.

As a $k$-module 
\begin{displaymath}
kB^Z(T)\rtimes Out(G):=\bigoplus_{\sigma\in Out(G)}kB^Z(T).
\end{displaymath}
If the element $u\in kB^Z(T)$ is in the component of this direct sum indexed by $\sigma$, then we will write $u_{\sigma}$.  

To define a product in $kB^Z(T)\rtimes Out(G)$ we introduce the following notation. If $\tau$ is an automorphism of $G$ and $U$ is a $Z$-fibred $T$-set, the $(T\times Z)$-set $\tau(U)$ is the set $U$ with the action of $T\times Z$ given by 
\begin{displaymath}
(t,\, z)\cdot u=(t,\, \tau^{-1}(z))u.
\end{displaymath}
Notice that $\tau(U)$ is also a $Z$-fibred $T$-set, so, this \textit{twist} defines an action of $Out(G)$ on $kB^Z(T)$, which we continue to denote by $\tau(u)$, for $\tau\in Out(G)$ and $u\in kB^Z(T)$. With this, the product in $kB^Z(T)\rtimes Out(G)$ is defined as 
\begin{displaymath}
v_{\tau}\cdot u_{\sigma}:=(v\, \tau(u))_{\tau\sigma}.
\end{displaymath}

By Remark \ref{base}, a $k$-basis of $kB^Z(T)\rtimes Out(G)$ is given by $[X,\,\alpha]_\sigma$, for $X\leqslant T$, a group homomorphism $\alpha:X\rightarrow Z$ and $\sigma\in Out(G)$. With this notation, the identity element of $kB^Z(T)\rtimes Out(G)$ is $[T,\,\uno]_{id}$, where $id:G\rightarrow G$ is the identity element and $\uno :T\rightarrow Z$ is the trivial morphism.

\begin{teo}
\label{elteo}
Let $G$ and $T$ be finite groups. There is an injective morphism of $k$-algebras from $kB^{Z(G)}(T)\rtimes Out(G)$ to $kB_T(G\times G)$. Moreover, by composition with $E_{T,\, G}$, this morphism extends to an injective morphism   $kB^{Z(G)}(T)\rtimes Out(G)\rightarrow\widehat{kB_T}(G)$ and its image is the subalgebra generated by $\mathcal{L}_p$.
\end{teo}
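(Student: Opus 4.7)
The plan is to define $\Phi$ on the standard basis of $kB^{Z(G)}(T)\rtimes Out(G)$ by
\[
\Phi([X,\alpha]_\sigma) := [(GGT)/D_{\sigma,\alpha,X}],
\]
extend $k$-linearly, verify well-definedness and multiplicativity, and then treat injectivity on $kB_T(G\times G)$ and on $\widehat{kB_T}(G)$ separately. By Lemma \ref{corres}, $\Phi$ coincides with $u_\sigma \mapsto [U,\sigma]$ when $u=[U]$, giving a coordinate-free description.

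For well-definedness I would apply Lemma \ref{sonigual} twice. Two pairs $(X,\alpha)$ and $(X',\beta)$ represent the same basis element of $kB^{Z(G)}(T)$ precisely when $(X',\beta) = ({}^tX,{}^t\alpha)$ for some $t\in T$; Lemma \ref{sonigual} with $g=1$ then delivers the equality of the transitive $GGT$-sets. Similarly, replacing a representative $\sigma\in Aut(G)$ of an $Out(G)$-class by $\sigma c_g$ does not change the image, by Lemma \ref{sonigual} with $t=1$.

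Multiplicativity is the main content. For $v_\tau=[Y,\beta]_\tau$ and $u_\sigma=[X,\alpha]_\sigma$, a direct stabilizer computation for $(T\times Z)/X_\alpha$ twisted by $\tau$ shows $\tau([X,\alpha])=[X,\tau\alpha]$ in $kB^{Z(G)}(T)$. Applying Remark \ref{base} and then $\Phi$ yields
\[
\Phi(v_\tau\cdot u_\sigma) = \sum_{t\in [Y\backslash T/X]} [(GGT)/D_{\tau\sigma,\,\beta\cdot{}^t(\tau\alpha),\,Y\cap {}^tX}],
\]
while Lemma \ref{composicion} gives
\[
\Phi(v_\tau)\times_G^d \Phi(u_\sigma) = \sum_{t\in[Y\backslash T/X]} [(GGT)/D_{\tau\sigma,\,\beta\cdot(\tau\,{}^t\alpha),\,Y\cap {}^tX}].
\]
Since ${}^t(\tau\alpha)=\tau\alpha\, c_{t^{-1}}=\tau({}^t\alpha)$, the two sums agree term by term. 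The identity element $[T,\uno]_{id}$ maps to $[(GGT)/(\Delta(G)\times T)]$, the unit of $kB_T(G\times G)$.

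Injectivity of $\Phi$ on $kB_T(G\times G)$ follows from the only-if direction of Lemma \ref{sonigual}: the map sends distinct basis vectors of the source to distinct transitive $GGT$-sets, and the latter form a $k$-basis. For the composition $E_{T,G}\circ\Phi$, each image $[(GGT)/D_{\sigma,\alpha,X}]$ lies in $\mathcal{S}'$ by Proposition \ref{lpinls}, so Lemma \ref{Einy} prevents any non-trivial $k$-linear combination of these transitive sets from lying in $\ker E_{T,G}$. Finally, the image of $E_{T,G}\circ\Phi$ is by construction the $k$-span of $\mathcal{L}_p$, and Lemma \ref{composicion} shows this span is closed under $\times_G^d$, so it is precisely the subalgebra of $\widehat{kB_T}(G)$ generated by $\mathcal{L}_p$. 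The main obstacle I expect is the multiplicativity calculation, where one must correctly identify the $Out(G)$-twist action on $kB^{Z(G)}(T)$ with post-composition by $\tau$ on the morphism data so that Lemma \ref{composicion} and Remark \ref{base} align.
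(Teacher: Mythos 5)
Your proposal is correct and follows essentially the same route as the paper: define $\Phi$ on the basis $[X,\alpha]_\sigma\mapsto[GGT/D_{\sigma,\alpha,X}]$, get well-definedness and injectivity from Lemma \ref{sonigual}, multiplicativity from the identification $\tau((X,\alpha))=(X,\tau\alpha)$ together with Remark \ref{base} and Lemma \ref{composicion}, and the passage to $\widehat{kB_T}(G)$ from Lemmas \ref{lpinls} and \ref{Einy}. Your explicit term-by-term check ${}^t(\tau\alpha)=\tau({}^t\alpha)$ just spells out what the paper leaves implicit.
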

\begin{proof}
We define 
\begin{displaymath}
\Phi: kB^{Z(G)}(T)\rtimes Out(G)\rightarrow kB_T(G\times G)
\end{displaymath}
by sending $[(X,\,\alpha)]_{\sigma}$ to $[(X,\,\alpha),\,\sigma]=[GGT/D_{\sigma,\, \alpha,\, X}]$ in $kB_T(G\times G)$. To see that this is well defined, suppose $[(Y,\, \beta)]_{\tau}=[(X,\,\alpha)]_{\sigma}$. By Notation \ref{delmono}, this means that there exists $t\in T$ such that $(X,\, \alpha)=({ }^tY,\, { }^t\beta)$ and that $\sigma=\tau$ in $Out(G)$. Hence, there exists $(g,\,t)\in G\times T$ such that $X= ^t\!\!Y$, $\sigma=\tau c_g$ and $\alpha=\beta c_{t^{-1}}$. Then, by Lemma \ref{sonigual}, we have $[GGT/D_{\sigma,\, \alpha,\, X}]=[GGT/D_{\tau,\, \beta,\, Y}]$. Going backwards, Lemma \ref{sonigual}  shows too that $\Phi$ is injective. 

To see that $\Phi$ is a ring homomorphism, notice first that if $\tau$ is an automorphism of $G$, then $\tau((X,\,\alpha))=(X,\, \tau\alpha)$. Indeed, if $(t,\,z)\in T\times Z$ is such that $(t,\,\tau^{-1}(z))X_\alpha=X_\alpha$, then, $t\in X$ and $z=\tau\alpha(t)$. Hence, by Remark \ref{base} and Lemma \ref{composicion}, we have that $\Phi$ is a ring homomorphism. Also, $\Phi([T,\, \uno]_{id})=[GGT/D_{id, \uno,\, T}]$. 

The rest of the statement follows from lemmas \ref{Einy} and \ref{lpinls}.
\end{proof}

\begin{coro}
\label{isoprimos}
Suppose  $G$ and $T$ are groups such that $(|G|,\, |T|)=1$. Then   $\mathcal{L}_p=\mathcal{L}_{s'}=\mathcal{G}$ is a basis for $\widehat{kB_T}(G)$ and $\widehat{kB_T}(G)\cong kB^{Z(G)}(T)\rtimes Out(G)$. Furthermore, in this case we have an isomorphism of $k$-modules,
\begin{displaymath}
kB_T(G\times G)\cong (kB^{Z(G)}(T)\rtimes Out(G))\oplus \mathcal{I}_T(G).
\end{displaymath}
\end{coro}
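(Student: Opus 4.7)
The plan is to show that, under the coprimality hypothesis, the generating set $\mathcal{G}$ coincides with $\mathcal{L}_p$, forcing all three sets in the statement to coincide and to form a basis; the isomorphism and splitting then follow formally from Theorem \ref{elteo}.

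First I would use Goursat's lemma to analyze $\mathcal{G}$. Recall from Definition \ref{notzeroimplies} that any representative of an element of $\mathcal{G}$ corresponds to a subgroup $A \leqslant G \times T$ with $p_1(A)=G$, together with a surjective morphism $f:A\to G$ whose kernel meets $k_1(A) \times \uno$ trivially. By Goursat's lemma, such an $A$ corresponds to a datum $(G,G_2,T_1,T_2,\phi)$ with $G_2 \trianglelefteq G$, $T_2 \trianglelefteq T_1 \leqslant T$, and $\phi:G/G_2 \xrightarrow{\sim} T_1/T_2$. The common order $|G/G_2|=|T_1/T_2|$ divides both $|G|$ and $|T|$, so it equals $1$. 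Hence $G_2=G$ and $T_1=T_2$, i.e.\ $A=G\times T_0$ for some $T_0 \leqslant T$. In particular $k_1(A)=G$, so the restriction of $f$ to $G\times\uno$ is an injective, hence bijective, endomorphism $\sigma\in Aut(G)$. Writing $f(g,t)=\sigma(g)\alpha(t)$ with $\alpha:T_0\to G$ a homomorphism, commutativity $\sigma(g)\alpha(t)=\alpha(t)\sigma(g)$ forces $\alpha(T_0)\subseteq Z(G)$. Thus every representative $[GGT/D]$ whose class lies in $\mathcal{G}$ has the form $[GGT/D_{\sigma,\alpha,T_0}]$.

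Next I would conclude the chain of equalities. The previous step shows $\mathcal{G}\subseteq \mathcal{L}_p$. Combined with the inclusions $\mathcal{L}_p\subseteq \mathcal{L}_{s'}\subseteq \mathcal{G}$ already established (Proposition \ref{lpinls} and the remark following Definition \ref{defls}), we obtain $\mathcal{L}_p=\mathcal{L}_{s'}=\mathcal{G}$. Since $\mathcal{G}$ generates $\widehat{kB_T}(G)$ as a $k$-module and $\mathcal{L}_{s'}$ is linearly independent by Lemma \ref{Einy}, the common set is a $k$-basis of $\widehat{kB_T}(G)$.

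For the algebra isomorphism, I would apply Theorem \ref{elteo}: the composition $E_{T,G}\circ\Phi : kB^{Z(G)}(T)\rtimes Out(G)\to \widehat{kB_T}(G)$ is injective and its image is the subalgebra generated by $\mathcal{L}_p$. Since $\mathcal{L}_p$ spans all of $\widehat{kB_T}(G)$, the map $E_{T,G}\circ\Phi$ is surjective as well, hence an isomorphism of $k$-algebras.

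Finally, for the $k$-module decomposition of $kB_T(G\times G)$, I would read $\Phi$ as a splitting of the short exact sequence
\begin{equation*}
0 \longrightarrow \mathcal{I}_T(G) \longrightarrow kB_T(G\times G) \xrightarrow{E_{T,G}} \widehat{kB_T}(G) \longrightarrow 0.
\end{equation*}
Indeed, $\Phi$ is $k$-linear and $E_{T,G}\circ\Phi$ is the isomorphism just obtained, so $kB_T(G\times G)=\Phi(kB^{Z(G)}(T)\rtimes Out(G))\oplus\mathcal{I}_T(G)$ as $k$-modules, which is the asserted decomposition. The only delicate point in the whole argument is the Goursat step, which is exactly where the hypothesis $(|G|,|T|)=1$ is used; once $A=G\times T_0$ is known, everything else is formal and follows from the results already proved.
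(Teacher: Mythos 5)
Your proposal is correct and follows essentially the same route as the paper: reduce $\mathcal{G}\subseteq\mathcal{L}_p$ using coprimality to force $A=G\times T_0$, then invoke Proposition \ref{lpinls}, Lemma \ref{Einy} and Theorem \ref{elteo} for the basis claim, the algebra isomorphism, and the splitting. The only difference is that you spell out, via Goursat's lemma, the step the paper leaves implicit (that $(|G|,|T|)=1$ forces $A=G\times T_0$ and that $\alpha$ lands in $Z(G)$), which is a correct and welcome elaboration rather than a new argument.
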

\begin{proof}
We show first that $\mathcal{G}$ is contained in $\mathcal{L}_p$. 

Let $D\leqslant GGT$ be a group such that $p_1(D)=G$, $k_1(D)=\uno$, $p_2(D)=G$ and $k_2(D)=\uno$. Then, 
\begin{displaymath}
D=\{(f(g,\, t),\, g,\, t)\mid (g,\, t)\in A\},
\end{displaymath}
where $A\leqslant G\times T$ satisfies $p_1(A)=G$ and $f:A\rightarrow G$ is a surjective group homomorphism such that $Ker f\cap (k_1(A)\times \uno)=\uno$. Since $(|G|,\, |T|)=1$, we  have that $A=G\times T_0$ for some $T_0\leqslant T$. As in the paragraph before Definition \ref{deflp}, this gives  $D=D_{\sigma,\, \alpha,\, T_0}$ for some $\sigma$ automorphism of $G$ and $\alpha :T_0\rightarrow Z(G)$ a group homomorphism.

With this we have the equality $\mathcal{L}_p=\mathcal{L}_{s'}=\mathcal{G}$. Since $\mathcal{G}$ is a generating set for $\widehat{kB_T}(G)$ and in this case it is linearly independent, then it is a basis for $\widehat{kB_T}(G)$. Also, by Lemma \ref{Einy}, this implies  that $E_{T,\,G}:kB_T(G\times G)\rightarrow \widehat{kB_T}(G)$ is a split epimorphism and $kB_T(G\times G)\cong \widehat{kB_T}(G)\oplus \mathcal{I}_T(G)$.

\end{proof}

\section{(Counter)examples}

Throughout this section $k$ is a field of characteristic $0$ and we consider the three sets we have been associating to $\widehat{kB_T}(G)$. That is, $\mathcal{G}$ (Definition \ref{notzeroimplies}), $\mathcal{L}_{s'}$ (Definition \ref{defls}) and $\mathcal{L}_p$ (Definition \ref{deflp}). In what follows we suppose that $\gamma$ is a basis of $\widehat{kB_T}(G)$, contained in $\mathcal{G}$ and which contains $\mathcal{L}_{s'}$. Such a basis exists, for example, by Theorem 2 of Chapter II, Section 7-1, in \cite{bour}. 

By means of the GAP program below, we computed two examples. The first example  provides groups $T$ and $G$ for which we have the proper inclusions 
\begin{displaymath}
\mathcal{L}_p\subset\mathcal{L}_{s'}\subset  \gamma \subset \mathcal{G}.
\end{displaymath}
It also shows that the morphism $\widehat{\alpha}_{GGT}$ is not injective in general. We include too an example with abelian groups $G$ and $T$, to show that not even with this extra hypothesis, we obtain equality between all of the sets above (except for $\gamma$ and $\mathcal{L}_{s'}$ in this case).

Given a finite group $T$, the program runs through a list of groups, and for each $G$ in this list it computes the following:
\begin{enumerate}
\item[Gen:] The cardinality of $Gen$, a set of representatives of the conjugacy classes of subgroups $D$ of $GGT$,  that satisfy $p_1(D)=G$, $p_2(D)=G$, $k_1(D)=\uno$ and $k_2(D)=\uno$. The image under $E_{T,\, G}$ of their corresponding  elements, $[GGT/D]\in kB_T(G\times G)$, is the set $\mathcal{G}$.
\item[St$'$:] The number of subgroups $D$ in $Gen$ that cannot be written as $D=E\ast F$ for some $E\leqslant GHT$, $F\leqslant HGT$ and $H$ a group with $|H|<|G|$. The image under $E_{T,\, G}$ of their corresponding elements, $[GGT/D]\in kB_T(G\times G)$, is the set $\mathcal{L}_{s'}$ and, by Lemma \ref{Einy}, St$'$ is  the cardinality of $\mathcal{L}_{s'}$. 
\item[Dim:] The dimension of the essential algebra $\widehat{kB_T}(G)$.
\item[Prod:] The sum 
\begin{displaymath}
\sum_{X\in [\mathscr{S}_T]}  |Out (G)||\overline{Hom}(X,\, Z(G))|, 
\end{displaymath}
where $\overline{Hom}(X,\, Z(G))$ is the set of orbits of $Hom(X,\, Z(G))$ under the right action of $N_T(X)$ described in  Section 1.3. By Theorem \ref{elteo}, Prod is the cardinality of $\mathcal{L}_p$.
\end{enumerate}

\subsection{General case}
This example  provides groups $T$ and $G$ for which we have the proper inclusions 
\begin{displaymath}
\mathcal{L}_p\subset\mathcal{L}_{s'}\subset  \gamma \subset \mathcal{G}.
\end{displaymath}
 We also show that the morphism $\widehat{\alpha}_{GGT}$ is not injective in this case.

For $T=Q_8$, the quaternion group, and $G=C_4$ we have
\begin{center}
Gen: 58, $\quad$ St$'$: 46, $\quad$ Dim: 52, $\quad$ Prod: 32.
\end{center}
The proper inclusions $\mathcal{L}_p\subset\mathcal{L}_{s'}\subset  \gamma$ are clear. In particular, there exists an element $v=[[GGT/D]]\in \gamma$ such that $D=E\ast F$ for some $E\leqslant GHT$, $F\leqslant HGT$ and $|H|<|G|$. Hence $0\neq v\in \widehat{kB_T}(G)$ and clearly $\widehat{\alpha}_{GGT}(v)=0$, so $\widehat{\alpha}_{GGT}$ is not injective. 

Notice that Gen being bigger than Dim does not imply that $\gamma$ is properly contained in $\mathcal{G}$, since we do not know, in general, exactly what happens to $Gen$ in $\widehat{kB_T}(G)$. To show this proper inclusion, we consider the group $\Delta_i=\langle(a,\, a,\,i)\rangle\leqslant GGT$, where $a$ is a generator of $G$ and $i$ is an element of order four in $Q_8$. Observe that $p_1(\Delta_i)=G$, $k_1(\Delta_i)=\uno$, $p_2(\Delta_i)=G$ and $k_1(\Delta_i)=\uno$. Hence, $v_i=[[GGT/\Delta_i]]\in \mathcal{G}$. If $v_i=0$, we are done. Otherwise, we consider the groups 
\begin{displaymath}
E=\langle (a,\, 1,\,i)\rangle\leqslant G\uno T\quad\textrm{and}\quad F=\langle (1,\, a,\,i)\rangle\leqslant \uno G T.
\end{displaymath}
We easily have $E\ast F=\Delta_i$. So, the composition
\begin{displaymath}
[G\uno T/E]\times^d_{\uno}[\uno GT/F]=\sum_{t\in[T/\langle i\rangle]}[GGT/E\ast{}^{(1,\, 1,\, t)}F]
\end{displaymath}
is easily seen to be equal to 
\begin{displaymath}
[GGT/\Delta_i]+[GGT/\Delta'_{i}],
\end{displaymath}
where $\Delta'_{i}=\langle(a,\, a^3,\,i)\rangle$. 

As before, $v'_i=[[GGT/\Delta'_i]]\in \mathcal{G}$. Hence, 
we have  $v_i$, $v'_i$ in $\mathcal{G}$ such that $v_i+v'_i=0$ in $\widehat{kB_T}(G)$. So,  $\gamma$ is properly contained in $\mathcal{G}$.

\subsection{Abelian case}
This example provides abelian groups $T$ and $G$ for which we have the proper inclusions 
\begin{displaymath}
\mathcal{L}_p\subset\mathcal{L}_{s'}\subset  \mathcal{G}.
\end{displaymath}
Recall that in this case $\mathcal{L}_{s'}$ is a basis of $\widehat{kB_T}(G)$. 

For $T=C_4$  and $G=C_4$ we have
\begin{center}
Gen: 22, $\quad$ St$'$: 16, $\quad$ Dim: 16, $\quad$ Prod: 14.
\end{center}
The proper inclusion $\mathcal{L}_p\subset\mathcal{L}_{s'}$ is clear. Also, since Gen is bigger than Dim, we have that there are groups $D$ in $Gen$ such that $D=E\ast F$ for some $E\leqslant GHT$, $F\leqslant HGT$ and $|H|<|G|$. In this case, the proof of Theorem \ref{iso2abe} gives us that $[[GGT/D]]=0$ in $\widehat{kB_T}(G)$ and, hence, that the zero vector is in $\mathcal{G}$.

\subsection{GAP program}

\begin{small}
\begin{lstlisting}
# Double shifted Burnside ring
#
# Computing the "star" product. The parameters are:
# l <= ght=G x H x T, m <= hkt=H x K x T, gkt=G x K x T
# p23lintp13m=p_23(L)\cap p_13(M)
# ph=(H x T -> H) pt=(H x T -> T)
#
etoile3:=function(ght,hkt,l,m,gkt,p23lintp13m,ph,pt)
local p1ght,p2ght,p3ght,e1ght,e2ght,e3ght,p1hkt,p2hkt,p3hkt,e1hkt,e2hkt,
e3hkt,p2lintp1m,e1gkt,e2gkt,e3gkt,k1,k2,s,u,v,w,phi,psi,g,h,k;
	p1ght:=Projection(ght,1);
	g:=Image(p1ght);
	p2ght:=Projection(ght,2);
	h:=Image(p2ght);
	p3ght:=Projection(ght,3);
	e1ght:=Embedding(ght,1);
	k1:=GeneratorsOfGroup(Intersection(l,Image(e1ght)));
	e2ght:=Embedding(ght,2);
	e3ght:=Embedding(ght,3);
	p1hkt:=Projection(hkt,1);
	p2hkt:=Projection(hkt,2);
	k:=Image(p2hkt);
	p3hkt:=Projection(hkt,3);
	e1hkt:=Embedding(hkt,1);
	e2hkt:=Embedding(hkt,2);
	e3hkt:=Embedding(hkt,3);
	k2:=GeneratorsOfGroup(Intersection(m,Image(e2hkt)));
	e1gkt:=Embedding(gkt,1);
	e2gkt:=Embedding(gkt,2);
	e3gkt:=Embedding(gkt,3);
	k1:=List(k1,x->Image(e1gkt,Image(p1ght,x)));
	k2:=List(k2,x->Image(e2gkt,Image(p2hkt,x)));
	phi:=GroupHomomorphismByFunction(p23lintp13m,ght,function(x) return Image(e2ght,Image(ph,x))*Image(e3ght,Image(pt,x));end);
	psi:=GroupHomomorphismByFunction(p23lintp13m,hkt,function(x) return Image(e1hkt,Image(ph,x))*Image(e3hkt,Image(pt,x));end);
	s:=GeneratorsOfGroup(p23lintp13m);
	v:=[];
	for u in s do
		w:=Image(e3gkt,Image(pt,u));
		Add(v,Image(e1gkt,First(g,x->Image(e1ght,x)*Image(phi,u) in l))*Image(e2gkt,First(k,y->Image(e2hkt,y)*Image(psi,u) in m))*w);
	od;
	return Subgroup(gkt,Concatenation(k1,k2,v));
end;

corps:=Rationals; # the field of coefficients
t:=QuaternionGroup(8); # the group T
rclt:=List(ConjugacyClassesSubgroups(t),Representative);
st:=StructureDescription(t);
sortie:=Concatenation("essential-double-",st,".res"); # the output file
PrintTo(sortie,"Group T = ",StructureDescription(t)," :\n******************************\n");
listeg:=AllGroups([1..20]); # list of groups G
for g in listeg do
	Print("\n***********************************\n");
	Print("Group G : ",StructureDescription(g),"\n");
	Print("***********************************\n");
	AppendTo(sortie,"\n***********************************\n");
	AppendTo(sortie,"Group G : ",StructureDescription(g),"\n");
	AppendTo(sortie,"***********************************\n");
	hpossible:=Filtered(listeg,x->Size(x)<Size(g)); # groups H through which factorization may occur in the double shifted Burnside ring
	ggt:=DirectProduct(g,g,t);
	p1ggt:=Projection(ggt,1);
	p2ggt:=Projection(ggt,2);
	p3ggt:=Projection(ggt,3);
	e1ggt:=Embedding(ggt,1);
	e2ggt:=Embedding(ggt,2);
	e3ggt:=Embedding(ggt,3);
	k1ggt:=Kernel(p1ggt);
	k2ggt:=Kernel(p2ggt);
	k3ggt:=Kernel(p3ggt);
	k23:=Intersection(k2ggt,k3ggt);
	k13:=Intersection(k1ggt,k3ggt);
	cggt:=ConjugacyClassesSubgroups(ggt);
# we keep only those subgroups of G x G x T with full first and second projection
	cggt:=Filtered(cggt,x->Image(p1ggt,Representative(x))=g and Image(p2ggt,Representative(x))=g);
# then we keep only those subgroups with trivial k1 and k2
	cggt:=Filtered(cggt,x->IsTrivial(Intersection(Representative(x),k23)) and IsTrivial(Intersection(Representative(x),k13)));
# the essential algebra will be a quotient of the vector space with basis 'cggt'
	lcggt:=Length(cggt);
	Print(lcggt," conjugacy classes of subgroups\n");
	AppendTo(sortie,lcggt," conjugacy classes of subgroups\n");
	ng:=Size(g);
	factetoile:=[];
	mat:=[]; # generators of the subspace to factor out in the vector space with basis 'cggt'
# start the test for factorization
	for h in hpossible do
		strh:=StructureDescription(h);
		Print("\n===============\nGroup H ",strh,"\n");
		AppendTo(sortie,"\n===============\nGroup H ",strh,"\n");
		ght:=DirectProduct(g,h,t);
		rcght:=List(ConjugacyClassesSubgroups(ght),Representative);
		hgt:=DirectProduct(h,g,t);
		p1ght:=Projection(ght,1);
		p2ght:=Projection(ght,2);
		p3ght:=Projection(ght,3);
		kk:=Intersection(Kernel(p2ght),Kernel(p3ght));
	# we keep only those subgroups of G x H x T with full projection to G
		rcght:=Filtered(rcght,x->Image(p1ght,x)=g);
	# then we keep only those subgroups with trivial k1
		rcght:=Filtered(rcght,x->IsTrivial(Intersection(x,kk)));
		Print("remaining ",Length(rcght)," conjugacy classes of subgroups\n");
		AppendTo(sortie,"remaining ",Length(rcght)," conjugacy classes of subgroups\n");
		lrcght:=Length(rcght);
		e1hgt:=Embedding(hgt,1);
		e2hgt:=Embedding(hgt,2);
		e3hgt:=Embedding(hgt,3);
		p1hgt:=Projection(hgt,1);
		p2hgt:=Projection(hgt,2);
		p3hgt:=Projection(hgt,3);
	# we build the isomorphism sigma : G x H x T -> H x G x T switching the first two components
		sigma:=GroupHomomorphismByFunction(ght,hgt,function(x) return 
            Image(e1hgt,Image(p2ght,x))*Image(e2hgt,Image(p1ght,x))*
            Image(e3hgt,Image(p3ght,x));end);
	# we transform the previous list of subgroups by sigma
		rchgt:=List(rcght,x->Image(sigma,x));
		ht:=DirectProduct(h,t);
		p1ht:=Projection(ht,1);
		p2ht:=Projection(ht,2);
		e1ht:=Embedding(ht,1);
		e2ht:=Embedding(ht,2);
		p23:=GroupHomomorphismByFunction(ght,ht,function(x) return Image(e1ht,Image(p2ght,x))*Image(e2ht,Image(p3ght,x));end);
		p13:=GroupHomomorphismByFunction(hgt,ht,function(x) return Image(e1ht,Image(p1hgt,x))*Image(e2ht,Image(p3hgt,x));end);
		for kl in [1..Length(rcght)] do
			l:=rcght[kl]; # the subgroup L of G x H x T
			p23l:=Image(p23,l);
			for km in [1..Length(rchgt)] do
				m:=rchgt[km]; # the subgroup M of H x G x T
				Print(kl,"|",km,"/",lrcght,"   \r");
				# compute the composition (G x H x T)/L o (H x G xT)/M in the vector space with basis 'cggt'
				vec:=List(cggt,x->Zero(corps));
				p13m:=Image(p13,m);
				# apply the Mackey formula for the composition
				d:=DoubleCosetRepsAndSizes(ht,p23l,p13m);
				d:=List(d,x->x[1]^-1);
				for z in d do
					hz:=Image(p1ht,z);
					tz:=Image(p2ht,z);
					gam:=Image(e1hgt,hz)*Image(e3hgt,tz);
					p23lintp13m:=Intersection(p23l,p13m^z); 
					et:=etoile3(ght,hgt,l,m^gam,ggt,p23lintp13m,p1ht,p2ht);
					tst:=First([1..Length(cggt)],x->et in cggt[x]);
					if tst<>fail then
						factetoile:=Union(factetoile,[tst]); # the subgroup 'et' can be factored as U * V
					fi;
					if Image(p1ggt,et)=g and Image(p2ggt,et)=g then
						p:=First([1..lcggt],x->et in cggt[x]); # is 'et' in the basis 'cggt'
						if p<>fail then
							vec[p]:=vec[p]+One(corps);
						fi;
					fi;
				od;
				Add(mat,vec); # adds the vector 'vec' to the list 'mat' of generators of the subspace
			od;
		od;
		Print(Length(factetoile)," subgroups can be factored\n");
		AppendTo(sortie,Length(factetoile)," subgroups can be factored\n");
	od;
# computes the dimension of the subspace
	if mat=[] then
		r:=0;
	else
		r:=RankMat(mat);
	fi;
	Print("---------------------------\n");
	Print("\nRemaining ",Length(cggt)-Length(factetoile)," subgroups <> U*V\n");
	AppendTo(sortie,"---------------------------\n");
	AppendTo(sortie,"\nRemaining ",Length(cggt)-Length(factetoile)," subgroups <> U*V\n");
	Print("Essential algebra of dimension ",lcggt," - ",r," = ",lcggt-r,"\n");
	AppendTo(sortie,"Essential algebra of dimension ",lcggt," - ",r," = ",lcggt-r,"\n");
# now compute the sum Sum_T|N(T)\\Hom(T,Z(G))||Out(G)|
	d:=0;
	zg:=Center(g);
	for tt in rclt do
		homs:=AllHomomorphisms(tt,zg);
		nhoms:=Length(homs);
		gtt:=GeneratorsOfGroup(Normalizer(t,tt));
		s:=[];
		for u in gtt do
			Add(s,PermList(List([1..nhoms],x->Position
			(homs,GroupHomomorphismByFunction(tt,zg,function(y) return Image(homs[x],y^u);end)))));
		od;
		s:=Set(s);
		s:=Group(s);
		d:=d+Length(Orbits(s,[1..nhoms]));
	od;
	autg:=AutomorphismGroup(g);
	inng:=InnerAutomorphismsAutomorphismGroup(autg);
	soutg:=Size(autg)/Size(inng); # |Out(G)|
	d:=d*soutg;
	Print("Sum_T|N(T)\\Hom(T,Z(G))||Out(G)| = ",d,"\n");
	AppendTo(sortie,"\nSum_T|N(T)\\Hom(T,Z(G))||Out(G)| = ",d,"\n");
od;
\end{lstlisting}

\end{small}

\section*{Acknowledgments}

The author is very thankful to Serge Bouc for his help with the GAP program. Also, many thanks to the referee for suggesting to consider the monomial Burnside ring in Section 3, thus giving a much nicer statement of Theorem \ref{elteo}.

\bibliographystyle{plain}
\bibliography{esencial}

\end{document}